\def\@citecolor{blue}
\theoremstyle{plain}
\newtheorem{thm}{Theorem}[section]
\newtheorem{lemma}[thm]{Lemma}
\newtheorem{cor}[thm]{Corollary}
\newtheorem{propn}[thm]{Proposition}
\theoremstyle{definition}
\newtheorem{defn}[thm]{Definition}
\newtheorem{example}[thm]{Example}
\newtheorem{question}[thm]{Question}
\newtheorem{constr}[thm]{Construction}
\theoremstyle{remark}
\newtheorem{remark}[thm]{Remark}
\newtheorem{discussion}[thm]{Discussion}
\def\theenumi{\@alph\c@enumi}
\DeclareMathOperator{\charact}{char}
\DeclareMathOperator{\homology}{H}
\DeclareMathOperator{\rhomo}{\widetilde H}
\newcommand{\naturals}{\mathbb{N}}
\newcommand{\ints}{\mathbb{Z}}
\newcommand{\rationals}{\mathbb{Q}}
\newcommand{\minus}{\ensuremath{\!\smallsetminus\!}}
\DeclareMathOperator{\ass}{Ass}
\DeclareMathOperator{\height}{ht}
\DeclareMathOperator{\tor}{Tor}
\DeclareMathOperator{\reg}{reg}
\DeclareMathOperator{\deln}{del}
\DeclareMathOperator{\link}{lk}
\DeclareMathOperator{\Star}{star}
\newcommand{\hilbFn}[2][\Bbbk]{\dim_{#1}\left[#2\right]}
\newcommand{\simplex}{\Delta}
\newcommand{\define}[1]{{\emph{#1}}}
\newcommand{\defeq}{:=}
\let\oldendremark\endremark
\def\endremark{\hfill\qedsymbol\oldendremark}
\let\oldendexample\endexample
\def\endexample{\hfill\qedsymbol\oldendexample}
\let\oldendconstruction\endconstruction
\def\endconstruction{\hfill\qedsymbol\oldendconstruction}
\title{Dependence of Betti Numbers on Characteristic}
\author{Kia Dalili}
\email{dalilik@missouri.edu}
\address{Department of Mathematics, University of Missouri, Columbia, MO}
\author{Manoj Kummini}
\email{nkummini@math.purdue.edu}
\address{Department of Mathematics, Purdue University, West Lafayette, IN}
\begin{document}

\begin{abstract}
We study the dependence of graded Betti numbers of monomial ideals on the
characteristic of the base field. The examples we describe include
bipartite ideals, Stanley--Reisner ideals of vertex-decomposable complexes
and ideals with componentwise linear resolutions. 
We give a description of bipartite graphs and, using discrete Morse theory,
provide a way of looking at the homology of arbitrary simplicial complexes
through bipartite ideals. We also prove that the Betti table of a monomial
ideal over the
field of rational numbers can be obtained from the Betti table over any
field by a sequence of consecutive cancellations.
\end{abstract}

\maketitle

\section{Introduction}

Let $R = \Bbbk[V]$ be a polynomial ring with a finite set $V$ of
indeterminates over a field $\Bbbk$. We consider $R$ to be \define{standard
graded}, \textit{i.e.}, $\deg x = 1$ for all $x \in V$. Write $\mathfrak m$
for the unique homogeneous maximal ideal $(V)R$. Let $M$ be a finitely generated
graded $R$-module. A \define{minimal graded free resolution} of $M$ is a complex
\begin{equation}
\label{equation:FFRdefn}
\xymatrix{%
F_\bullet : & 0 \ar[r] & F_p \ar[r]^{\phi_n} & \cdots
\ar[r]^{\phi_2} & F_1 \ar[r]^{\phi_1} & F_0 \ar[r] & 0
}
\end{equation}
of finitely generated graded free $R$-modules and homomorphisms such that
\begin{inparaenum}
\item for all $i \geq 1$, $\phi_i$ is of degree $0$,
\item for all $i \geq 1$, $\phi_i(F_i) \subseteq \mathfrak m F_{i-1}$, and
\item $\homology_0(F_\bullet) \simeq M$ and $\homology_i(F_\bullet) = 0$
for all $i \geq 1$. 
\end{inparaenum}
The numerical information of a free resolution, \textit{i.e.}, the degrees
of minimal generators of the $F_i$ is captured in the list of Betti numbers
of $M$; the \define{$(i,j)$th graded Betti number} of $M$, denoted
$\beta_{i,j}(M)$, is the number of minimal homogeneous generators of $F_i$ of
degree $j$. The \define{$i$th total Betti number} of $M$ is $\beta_i(M) =
\sum_j \beta_{i,j}(M)$.  
We have $\beta_{i,j}(M) = \hilbFn {\tor_i^R(\Bbbk, M)}_j$, so it
is an invariant of $M$, independent of the choice of the
free resolution $F_\bullet$. The set of graded Betti numbers is represented
in terms of a \define{Betti table} $\beta(M)$, in which the entry at column
$i$ and row $j$ is $\beta_{i, i+j}(M)$. Similarly, if $G_\bullet$ is
a complex of finitely generated graded free $R$-modules and homomorphisms,
we write $\beta(G_\bullet)$ for the Betti table of $G_\bullet$, in which the
entry at column $i$ and row $j$ is $\hilbFn {G_i \otimes_R \Bbbk}_{i+j}$.
Here we wish to understand the following question:
\begin{question}
\label{qn:conditionsForIndep}
Suppose that $I$ is a monomial $R$-ideal. Under what conditions is
$\beta(I)$ independent of the characteristic of $\Bbbk$?

\end{question}

We will see below (Proposition~\ref{propn:polarization}) that we can
immediately reduce to the case that $I$ is generated by squarefree
monomials. Then using Stanley--Reisner theory (specifically, Hochster's
formula relating Betti numbers to simplicial homology ---
see~\eqref{eqn:hochsterFormulaDirec} below) we can translate the
problem to one of determining whether certain simplicial complexes have
torsion-free homology. Therefore, in principle,
Question~\ref{qn:conditionsForIndep} has a straightforward answer; the
purpose of this note is to describe some sufficient conditions that would
guarantee the independence of $\beta(I)$ from $\charact \Bbbk$. We will
also give some examples of ideals with strong combinatorial properties,
which, nonetheless, have Betti tables that depend on $\charact \Bbbk$.

This work is motivated in part by questions raised by J.~Herzog and by the
paper of M.~Katzman~\cite{KatzmanCharIndep06}. Various authors have studied
the dependence of Betti tables on the characteristic. In~\cite
{TeraiHibiBettiNos96}, N.~Terai and T.~Hibi showed that if $I$ is generated
by quadratic square-free monomials, then $\beta_2(I)$ and $\beta_3(I)$ do
not depend on $\charact \Bbbk$. It follows from a result of B.~Xu~\cite{
XuPlanarCliques01}*{Lemma~26} that if $I$ is generated by 
quadratic square-free monomials and the $1$-skeleton of the
Stanley--Reisner complex of $I$ is a planar graph, then all the Betti
numbers of $I$ are independent of the characteristic.

We begin with describing polarization and quoting some relevant results in
combinatorial commutative algebra.
In Section~\ref{sec:whiskers}, we
will give a construction of vertex-decomposable
(Definition~\ref{defn:vertexDec}) simplicial complexes whose
Stanley--Reisner ideals have Betti tables that depend on $\charact \Bbbk$.
Section~\ref{sec:bipartiteGraphs} describes bipartite ideals. Given a
simplicial complex, we construct a bipartite ideal whose Betti numbers give
the homology of the simplicial complex, using which we exhibit an example
of a bipartite ideal $I$ such that $\beta(I)$ depends on $\charact \Bbbk$.
In Section~\ref{sec:cancellation}, we look at consecutive cancellations in
Betti tables (Definition~\ref{defn:cancellation}) and show that
ideals with componentwise linear resolution have Betti tables independent
of the characteristic. We will use~\cite{eiscommalg} as a general reference
in commutative algebra, and~\cite{BrHe:CM} and~\cite{MiStCCA05} for its
relation to combinatorics.

\section{Preliminaries}

We will use $V$ to denote an arbitrary set of vertices, as well as the
variables in the polynomial ring $R = \Bbbk[V]$. Write $V = \{x_1, \ldots,
x_n\}$. For a monomial $R$-ideal $I$, a \define{polarization} of $I$ in a
larger polynomial ring $R'$ is the squarefree monomial ideal $I'$ generated
by monomials $\prod_{i=1}^n\prod_{j=1}^{a_i} x_{i,j}$ for every minimal
monomial
generator $x_1^{a_1}\cdots x_n^{a_n}$ of $I$. For example, a polarization
of $(x_1^2, x_1x_2, x_2^3)$ is $(x_{1,1}x_{1,2}, x_{1,1}x_{2,1},
x_{2,1}x_{2,2}x_{2,3})$. See~\cite{MiStCCA05}*{Section~3.2 and
Exercise~3.15} for details. We can get a minimal free resolution of
$R/I$ from a minimal free resolution of $R'/I'$. Thus:

\begin{propn}
\label{propn:polarization}
Suppose that $I$ is a monomial $R$-ideal. Let $I'$ be a polarization of $I$
in a larger polynomial ring $R'$. Then, $\beta(I)$ depends on $\charact
\Bbbk$ if and only if $\beta(I')$ depends on $\charact \Bbbk$.
\hfill\qedsymbol
\end{propn}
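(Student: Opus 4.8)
The plan is to establish the stronger fact that polarization preserves all graded Betti numbers over an arbitrary fixed field, and then to deduce the equivalence about characteristic-dependence. Concretely, I would show that for every field $\Bbbk$ and all $i,j$ one has $\beta_{i,j}(R/I) = \beta_{i,j}(R'/I')$, after the evident identification of the $\ints$-grading of $R'$ with that of $R$ under which each new variable $x_{i,j}$ carries the same degree as $x_i$. Because $\beta_{i,j}(I) = \beta_{i+1,j}(R/I)$ for all $i,j$, the table $\beta(I)$ is obtained from $\beta(R/I)$ by a single homological shift, and likewise for $I'$; hence $\beta(I)$ and $\beta(I')$ carry exactly the same information for each $\Bbbk$, and in particular one depends on $\charact \Bbbk$ if and only if the other does.

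To prove the displayed equality I would use the standard description of polarization as a deformation by a regular sequence of linear forms. Writing $a_i$ for the largest exponent of $x_i$ occurring among the minimal generators of $I$, the ring $R'$ adjoins variables $x_{i,1}, \dots, x_{i,a_i}$ in place of each $x_i$, and the linear forms $\theta = \{\, x_{i,j} - x_{i,1} : 1 \le i \le n,\ 2 \le j \le a_i \,\}$ satisfy $R'/(I' + (\theta)) \cong R/I$ via $x_{i,j} \mapsto x_i$. The key input, for which I would cite~\cite{MiStCCA05}*{Section~3.2 and Exercise~3.15}, is that $\theta$ is a regular sequence on $R'/I'$; this is where the squarefree combinatorics of the polarized generators enters, and it is the single nontrivial ingredient.

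Granting this, I would invoke the fact that reduction modulo a nonzerodivisor of degree $1$ preserves graded Betti numbers. If $\ell$ is $N$-regular of degree $1$, then $\tor^{R'}_{>0}(R'/(\ell), N) = 0$ (compute from the Koszul resolution $0 \to R'(-1) \xrightarrow{\ell} R' \to R'/(\ell) \to 0$), so tensoring a minimal graded free resolution of $N$ over $R'$ with $R'/(\ell)$ yields an acyclic complex, which is moreover a \emph{minimal} graded free resolution of $N/\ell N$ over $R'/(\ell)$ since the differential entries lie in the graded maximal ideal and remain there after reduction. Comparing residue-field tensors then gives $\hilbFn{\tor_i^{R'}(\Bbbk, N)}_j = \hilbFn{\tor_i^{R'/(\ell)}(\Bbbk, N/\ell N)}_j$ for all $i,j$. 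Applying this successively along $\theta$ with $N = R'/I'$ transforms a minimal resolution of $R'/I'$ into one of $R/I$ without altering any graded Betti number, which is precisely the displayed equality. The main obstacle is therefore not the homological bookkeeping, which is routine, but the verification that $\theta$ is regular on $R'/I'$; once that structural fact is in hand the remainder follows formally.
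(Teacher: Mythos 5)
Your proposal is correct and follows essentially the same route as the paper: the paper's entire proof is the one-line observation that a minimal free resolution of $R/I$ is obtained from one of $R'/I'$, with the details (specialization along the regular sequence of linear forms $x_{i,j}-x_{i,1}$) delegated to the cited reference \cite{MiStCCA05}*{Section~3.2 and Exercise~3.15}. You have simply written out that standard argument in full, including the correct homological bookkeeping for reduction modulo a degree-one nonzerodivisor.
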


\subsection*{Hochster's Formula} 
(See~\cite{MiStCCA05}*{Corollary~5.12 and Corollary~1.40}.)
For $\sigma \subseteq V$, we denote by
$\Delta|_\sigma$ the simplicial complex obtained by taking all the faces of
$\Delta$ whose vertices belong to $\sigma$. Note that $\Delta|_\sigma$ is
the Stanley-Reisner complex of the ideal $I \cap \Bbbk[\sigma]$.  First, the multidegrees $\sigma$
with $\beta_{i,\sigma}(R/I) \neq 0$ are squarefree. Secondly, for all
squarefree multidegrees $\sigma$, 
\begin{equation}
\beta_{i,\sigma} (R/I) = \dim_\Bbbk \rhomo_{\vert \sigma \vert - i -
1}(\Delta \vert_\sigma; \Bbbk)
\label{eqn:hochsterFormulaDirec}
\end{equation}

Let $I \subseteq R = \Bbbk[V]$ be a squarefree monomial ideal. Let $W
\subseteq V$ and $J = (I \cap \Bbbk[W])R$. Then,
\begin{equation}
\label{eqn:bettiNosRestr}
\beta_{i,\sigma}(R/J) = 
\begin{cases}
0, & \sigma \nsubseteq W, \\
\beta_{i,\sigma}(R/I), & \sigma \subseteq W.
\end{cases}
\end{equation}

\begin{remark}
\label{remark:depOnCharIffTorsion}
Let $\Delta$ be a simplicial complex with Stanley--Reisner ideal $I$. Then,
by~\eqref{eqn:hochsterFormulaDirec} and the universal coefficient theorem
for homology~\cite {HatcAlgTop02}*{Theorem~3A.3}, we see that
$\beta(I)$ depends on $\charact \Bbbk$ if and only if the groups
$\homology_*(\Delta; \ints)$ have torsion.
\end{remark}

\begin{example}[G.~Reisner~\cite{BrHe:CM}*{Section~5.3}]
\label{example:Reisner}
Let $\Delta$ be the minimal triangulation of $\mathbb{RP}^2$ on the vertex
set $V = \{x_1, \ldots, x_6\}$ with facets
$x_4x_5x_6$, $x_3x_5x_6$, $x_2x_4x_6$, $x_1x_3x_6$, $x_1x_2x_6$,
$x_1x_4x_5$, $x_2x_3x_5$, $x_1x_2x_5$, $x_2x_3x_4$ and $x_1x_3x_4$.
Then
$I = (x_1x_2x_3, x_1x_2x_4, x_1x_3x_5, x_2x_4x_5,
x_3x_4x_5, x_2x_3x_6, x_1x_4x_6, x_3x_4x_6, x_1x_5x_6, x_2x_5x_6)$.
The Betti table of $I$ depends on $\charact \Bbbk$, owing to the fact that
$\rhomo_1(\mathbb{RP}^2;\ints) \simeq \ints/2$.  
When $\charact \Bbbk =2$ and when $\charact \Bbbk \neq
2$, $\beta(I)$ is, respectively:

\begin{center}
\begin{tabular}{lcr}
\begin{tabular}{r|ccccc}
\hline
          & 0& 1& 2&3&4 \\
\hline
     total& 1&10&15&7&1 \\
\hline
         0& 1& .& .&.&. \\
         1& .& .& .&.&. \\
         2& .&10&15&6&1 \\
         3& .& .& .&1&. \\
\hline
\end{tabular}
& \hspace{3em} or \hspace{3em} &
\begin{tabular}{r|cccc }
\hline
          & 0& 1& 2&3 \\
\hline
     total& 1&10&15&6 \\
\hline
         0& 1& .& .&. \\
         1& .& .& .&. \\
         2& .&10&15&6 \\
\hline
\end{tabular}
\end{tabular}
\end{center}
\end{example}

\begin{remark}
Let $\Delta$ be any simplicial complex on $V$ and $x \in V$.
Then there exists a decomposition $\Delta = \Star_{\Delta}(x) \cup
\deln_{\Delta}(x)$, where $\Star_{\Delta}(x) = \{F \in \Delta : F \cup
\{x\} \in \Delta\}$ and $\deln_{\Delta}(x) = \Delta|_{V \minus \{x\}}$.
Note that $\Star_{\Delta}(x) \cap \deln_{\Delta}(x) =
\link_{\Delta}(x)$, called the \define{link} of $x$ in $\Delta$.
Its Stanley-Reisner ideal in $\Bbbk[V \minus \{x\}]$ is
$(I: x) \cap \Bbbk[V \minus \{x\}]$.
\end{remark}

\begin{discussion}
\label{discussion:MV}
Let $\Delta$ be any simplicial complex on $V$ and $x \in V$. Since $\Star_{\Delta}(x)$ is a cone over $x$, we obtain, from the
Mayer--Vietoris sequence on homology~\cite{HatcAlgTop02}*{Section~2.2}, the
following exact sequence:
\begin{equation}
\xymatrix{%
& \cdots \ar[r] &
\rhomo_i(\link_{\Delta}(x); \ints) \ar[r]  &
\rhomo_i(\deln_{\Delta}(x); \ints) \ar[r] &
\rhomo_i(\Delta; \ints) \ar[r] & \cdots \ar[r] &
\rhomo_0(\Delta; \ints) \ar[r]  &0.
}
\end{equation}
In particular, if $\rhomo_*(\deln_{\Delta}(x); \ints) = 0$, then 
$\rhomo_{i+1}(\Delta; \ints) \simeq \rhomo_i(\link_{\Delta}(x); \ints)$,
for all $i \geq 0$. \hfill\qedsymbol
\end{discussion}

\section{Ideals containing powers}
\label{sec:whiskers}

Let $I$ be a monomial $R$-ideal containing $x_1^i$ for some $i \geq 1$. In
Theorem~\ref{thm:powers} we describe when $\beta(I)$ would be independent
of the characteristic, from which we derive a result of M.~Katzman and
construct examples of vertex-decomposable simplicial complexes whose free
resolution depends on the characteristic.

\begin{thm}
\label{thm:powers}
Let $I$ be a monomial $R$-ideal containing $x_1^i$ for some $i \geq 1$.
Write $I = (J, x_1^t)$ minimally, \textit{i.e.}, $t$ is the least integer
such that $x_1^t \in I$ and $J$ is generated by the elements of $I$ not
divisible by $x_1^t$. Then the following are
equivalent:
\begin{enumerate}
\item $\beta(I)$ is independent of $\charact \Bbbk$.
\item Both $\beta(J)$ and $\beta((I\!:_R\!x_1))$ are independent of
$\charact \Bbbk$.
\end{enumerate}
\end{thm}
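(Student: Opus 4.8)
The plan is to reduce the whole equivalence to two short exact sequences that peel off a common ``$x_1$-free'' ideal, and then to translate the resulting numerical identities into statements about $\charact \Bbbk$ using the inequality $\beta^{\Bbbk}_{i,j}\ge \beta^{\rationals}_{i,j}$. Write $J_0 \defeq (J :_R x_1^t)$; since every minimal generator of $J$ has $x_1$-degree at most $t-1$ (minimality of $t$), $J_0$ is generated by monomials not involving $x_1$. I would first record the two sequences
\[
0\to (R/J_0)(-t)\xrightarrow{\ x_1^t\ } R/J\to R/I\to 0,\qquad 0\to (R/J_0)(-(t-1))\xrightarrow{\ x_1^{t-1}\ } R/(J:_Rx_1)\to R/(I:_Rx_1)\to 0,
\]
the second coming from $(I:_Rx_1)=(J:_Rx_1)+(x_1^{t-1})$ together with $(x_1^{t-1})\cap(J:_Rx_1)=x_1^{t-1}(J:_Rx_1^t)=x_1^{t-1}J_0$. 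The crucial claim is that both are \emph{Betti splittings}: each multiplication map induces the zero map on $\tor^R_\bullet(-,\Bbbk)$. This I would prove via the fine $\ints^{n}$-grading: the multigraded Betti numbers of a monomial ideal are supported in multidegrees dividing the lcm of its minimal generators, hence in $x_1$-degree at most $t-1$ for $R/J$ and at most $t-2$ for $R/(J:_Rx_1)$, whereas (as $J_0$ is $x_1$-free) the source of each map is concentrated in $x_1$-degree $t$, respectively $t-1$. Each induced map therefore lands in a multidegree in which the target vanishes.

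Granting this, the long exact $\tor$-sequences break into split pieces, giving, over \emph{every} field $\Bbbk$, the identities $\beta^{\Bbbk}_{i,j}(R/I)=\beta^{\Bbbk}_{i,j}(R/J)+\beta^{\Bbbk}_{i-1,j-t}(R/J_0)$ and $\beta^{\Bbbk}_{i,j}(R/(I:_Rx_1))=\beta^{\Bbbk}_{i,j}(R/(J:_Rx_1))+\beta^{\Bbbk}_{i-1,j-(t-1)}(R/J_0)$. Now I would invoke $\beta^{\Bbbk}_{i,j}\ge\beta^{\rationals}_{i,j}$ entrywise for each of the monomial ideals involved (this is the universal-coefficient/torsion mechanism of Remark~\ref{remark:depOnCharIffTorsion}). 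Subtracting the identity over $\Bbbk$ from the one over $\rationals$ expresses the nonnegative difference $(\beta^{\Bbbk}-\beta^{\rationals})(R/I)$ as a sum of the (nonnegative) differences for $R/J$ and for $R/J_0$. Since a sum of nonnegative integers vanishes iff each summand does, this yields at once that $\beta(I)$ is independent of $\charact\Bbbk$ if and only if both $\beta(J)$ and $\beta(J_0)$ are, and likewise that $\beta((I:_Rx_1))$ is independent of $\charact\Bbbk$ if and only if both $\beta((J:_Rx_1))$ and $\beta(J_0)$ are.

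With these two equivalences, the implication $(2)\Rightarrow(1)$ is immediate: characteristic-independence of $(I:_Rx_1)$ forces that of $J_0$, and together with that of $J$ the first equivalence delivers characteristic-independence of $I$. The remaining implication $(1)\Rightarrow(2)$ is where the genuine work lies. The first equivalence shows that characteristic-independence of $\beta(I)$ yields that of $\beta(J)$ \emph{and} of $\beta(J_0)$ simultaneously; by the second equivalence it then remains only to deduce that $\beta((J:_Rx_1))$ is independent of $\charact\Bbbk$. I expect this last step to be the main obstacle, since (modulo the two splittings above) it is logically equivalent to the full strength of the theorem: one must show that $\beta(I)$ being characteristic-independent already forces $\beta((I:_Rx_1))$ to be so, and the splittings alone do not see this. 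To attack it I would polarize to the squarefree case (Proposition~\ref{propn:polarization}) and run the Mayer--Vietoris sequence of Discussion~\ref{discussion:MV} at a suitable vertex, aiming to arrange that the relevant deletion is acyclic so that $\rhomo_{\ast+1}(\Delta;\ints)\cong\rhomo_{\ast}(\link_{\Delta}(x);\ints)$ and integral torsion is transported in both directions; an induction on $t$, peeling one power of $x_1$ at a time, would organize the bookkeeping. Establishing that torsion in the homology of $(I:_Rx_1)$ cannot appear without already being visible in the homology controlling $\beta(I)$ is the decisive point.
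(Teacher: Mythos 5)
Your two exact sequences and the claim that each is a ``Betti splitting'' are correct: $J_0=(J:_Rx_1^t)$ is generated by monomials not involving $x_1$, so the source of each multiplication map has multigraded Betti numbers concentrated in $x_1$-degree exactly $t$ (resp.\ $t-1$), while the Taylor complex shows that the multigraded Betti numbers of $R/J$ (resp.\ $R/(J:_Rx_1)$) live in $x_1$-degree at most $t-1$ (resp.\ $t-2$); hence the induced maps on $\tor$ vanish and the identities
\[
\beta^{\Bbbk}_{i,j}(R/I)=\beta^{\Bbbk}_{i,j}(R/J)+\beta^{\Bbbk}_{i-1,j-t}(R/J_0),
\qquad
\beta^{\Bbbk}_{i,j}(R/(I:_Rx_1))=\beta^{\Bbbk}_{i,j}(R/(J:_Rx_1))+\beta^{\Bbbk}_{i-1,j-t+1}(R/J_0)
\]
hold over every field. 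Together with the semicontinuity $\beta^{\Bbbk}_{i,j}\ge\beta^{\rationals}_{i,j}$ this does give your two equivalences, and therefore a complete and correct proof of $(2)\Rightarrow(1)$ by a purely algebraic route, different from the paper (which proves that direction by polarization, Hochster's formula and Mayer--Vietoris). Your argument even settles the case $t=2$ of the whole theorem, since there $(J:_Rx_1)=J_0$.

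The implication $(1)\Rightarrow(2)$, however, is genuinely missing, as you acknowledge. Your reduction of it to the statement ``$\beta(J)$ and $\beta(J_0)$ independent $\Rightarrow$ $\beta((J:_Rx_1))$ independent'' is a reformulation, not progress: as you observe, modulo your two equivalences it is equivalent to the theorem, and since $J$ contains no power of $x_1$, the theorem cannot be fed back into it by induction on $t$. The splitting identities relate $I$ to the pair $(J,J_0)$ and $(I:_Rx_1)$ to the pair $((J:_Rx_1),J_0)$, but they supply no link whatsoever between $(J:_Rx_1)$ and $(J,J_0)$; supplying that link is exactly the hard half. Your closing sketch (polarize, invoke Discussion~\ref{discussion:MV}, induct on $t$) names the right tools but omits the one observation that makes them work, which is the actual content of the paper's proof: after polarizing, with $y_1,\ldots,y_t$ the variables lying over $x_1$, \emph{every generator of $I'$ divisible by some $y_j$ with $j\ge 2$ is divisible by $y_1$}. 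Consequently, for a multidegree $\tau$ containing $y_1$ and some $y_j$, $j\ge2$, the deletion $\deln(y_1)$ of the Stanley--Reisner complex of $I'\cap\Bbbk[\tau]$ is a cone over $y_j$, hence acyclic, and Mayer--Vietoris yields an isomorphism of \emph{integral} homology groups
\[
\rhomo_{i+1}\bigl(\Delta_{I'\cap\Bbbk[\tau]};\ints\bigr)\;\simeq\;
\rhomo_{i}\bigl(\Delta_{(I':y_1)\cap\Bbbk[\tau\minus\{y_1\}]};\ints\bigr),
\]
while multidegrees missing one of the $y_i$ only see $J'$, by \eqref{eqn:bettiNosRestr}. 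It is this isomorphism over $\ints$, combined with Remark~\ref{remark:depOnCharIffTorsion}, that transports torsion --- hence characteristic dependence --- in both directions between $I'$ and $(I':_{R'}y_1)$. No field-by-field numerical identity of the kind you establish can detect this, precisely because (as in Example~\ref{example:Reisner}) the dependence is caused by integral torsion rather than by anything visible over a single field. So the proposal, as written, proves only one direction of the theorem.
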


\begin{proof}
If $t=1$, then $I = (J, x_1)$ and $J$ is an ideal extended from $\Bbbk[x_2,
\ldots, x_n]$. Since $x_1$ is a nonzerodivisor on $R/J$, we see that
$\beta(I)$ depends on the characteristic if and only if $\beta(J)$ depends
on the characteristic. Therefore we may assume that $t \geq 2$.

We will use polarization (by Proposition~\ref{propn:polarization}) to
reduce to the case of squarefree monomial ideals. Let $I'$ be a
polarization of $I$ in a polynomial ring $R'$; we will denote the variables
that correspond to $x_1$ by $y_1, \ldots, y_t$ and those that correspond to
$x_2, \ldots, x_n$ by $z_1, \ldots, z_m$. Write $I'$ minimally as $(J',
y_1\cdots y_t)$. Note that $J'$ and $(I'\!:_{R'}\!y_1)$ are, respectively,
the polarization of $J$ and $(I\!:_R\!x_1)$ in $R'$. By
Proposition~\ref{propn:polarization}, it suffices to show that $\beta(I')$
is independent of $\charact \Bbbk$ if and only if both $\beta(J')$ and
$\beta((I'\!:_{R'}\!y_1))$ are independent of $\charact \Bbbk$.  Since $J'
= (I' \cap \Bbbk[y_1, \ldots, y_{t-1}, z_1, \ldots, z_m])R'$, we see,
by~\eqref{eqn:bettiNosRestr}, that if $\beta(I')$ is independent of
$\charact \Bbbk$ then $\beta(J')$ is independent of $\charact \Bbbk$.
Therefore, we will assume that $\beta(J')$ is independent of $\charact
\Bbbk$ and show that $\beta(I')$ is independent of $\charact \Bbbk$ if and
only if $\beta((I'\!:_{R'}\!y_1))$ is independent of $\charact \Bbbk$.

Suppose that $\beta_{i,\tau}(I')$ depends on $\charact \Bbbk$, for some
$\tau \subseteq \{y_1, \ldots, y_t, z_1, \ldots, z_m\}$ and $i$. Then 
$\{y_1, \ldots, y_t\} \subseteq \tau$, for, otherwise,
$\beta_{i,\tau}(I') = \beta_{i,\tau}(J')$. Let $\Delta$ be the
Stanley--Reisner complex of $I' \cap \Bbbk[\tau]$ on the vertex set
$\tau$. Since every generator of $I'$ that is divisible by $y_2$ is also
divisible by $y_1$, we see that $\deln_{\Delta}(y_1)$ is a cone over $y_2$;
in fact, it is a cone over the simplex on $y_2, \ldots, y_t$. On the other
hand, $\link_{\Delta}(y_1)$ is the Stanley--Reisner complex (on $\tau
\minus \{y_1\}$) of $(I'\!:_{R'}\!y_1) \cap \Bbbk[\tau \minus \{y_1\}]$. By
Discussion~\ref{discussion:MV}, Remark~\ref{remark:depOnCharIffTorsion}
and~\eqref{eqn:bettiNosRestr}, we see that $\beta((I'\!:_R\!y_1))$
depends on $\charact \Bbbk$. 

Conversely, assume that $\beta_{i, \sigma}((I'\!:_R\!y_1))$ depends on
$\charact \Bbbk$ for some $\sigma \subseteq \{y_1, \ldots, y_t, z_1,
\ldots, z_m\}$ and $i$. Then $y_1 \not \in \sigma$. Write $\tau = \sigma
\cup \{y_1\}$. Now, reversing the above argument, we see that $\beta(I')$
depends on $\charact \Bbbk$.
\end{proof}

\begin{cor}[Katzman~\protect{\cite{KatzmanCharIndep06}*{Corollary~1.6}}]
\label{thm:katzmanWhiskers}
Let $I$ be quadratic squarefree monomial $R$-ideal, and let $y$ be
algebraically independent over $R$. Then $\beta((IR[y], x_1y))$ is
independent of $\charact \Bbbk$ if and only if $\beta(I)$ is independent of
$\charact \Bbbk$.
\end{cor}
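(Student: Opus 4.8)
The plan is to realize $\tilde I := (IR[y], x_1y)$ as a polarization of an ideal that contains a power of $x_1$, and then feed that ideal into Theorem~\ref{thm:powers}. Concretely, I would set $K = (I, x_1^2) \subseteq R$. Since $I$ is quadratic and squarefree, no minimal generator of $I$ is divisible by $x_1^2$, so $x_1^2$ is a minimal generator of $K$ and $x_1 \notin K$. Splitting the variable $x_1$ into two polarization variables $x_1, y$ (and leaving the remaining variables untouched, as each occurs to the first power only) sends $x_1^2 \mapsto x_1y$ and fixes every squarefree generator of $I$; hence $\tilde I$ is a polarization of $K$. By Proposition~\ref{propn:polarization}, $\beta(K)$ is independent of $\charact \Bbbk$ if and only if $\beta(\tilde I)$ is.

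Next I would apply Theorem~\ref{thm:powers} to $K$. The least power of $x_1$ lying in $K$ is $t = 2$, and the part of $K$ not divisible by $x_1^2$ is exactly $I$, so in the minimal decomposition $K = (J, x_1^2)$ we have $J = I$. The theorem then gives that $\beta(K)$ is independent of $\charact \Bbbk$ if and only if both $\beta(I)$ and $\beta((K :_R x_1))$ are. One implication of the corollary is now immediate: if $\beta(\tilde I)$ is independent of the characteristic, then so is $\beta(K)$, whence $\beta(I)$ is as well. The content of the converse is to show that independence of $\beta(I)$ forces independence of $\beta((K :_R x_1))$.

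For that I would compute the colon ideal explicitly. Writing $G$ for the graph whose edge ideal is $I$, one has $(K :_R x_1) = (I :_R x_1) + (x_1) = (x_1,\, x_v : v \in N(x_1)) + I(G[W])$, where $N(x_1)$ is the set of neighbors of $x_1$, $W = V \minus (\{x_1\} \cup N(x_1))$, and $I(G[W])$ is the edge ideal of the induced subgraph on $W$ (every edge meeting $\{x_1\} \cup N(x_1)$ becomes redundant once the corresponding variable is already a generator). The two summands involve disjoint sets of variables, so a minimal free resolution of $R/(K :_R x_1)$ is the tensor product of those of the two factors; the first factor is generated by variables and hence has a Koszul, characteristic-independent resolution. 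Thus $\beta((K :_R x_1))$ is independent of $\charact \Bbbk$ if and only if $\beta(I(G[W]))$ is. Finally, since $I(G[W]) = (I \cap \Bbbk[W])R$, equation~\eqref{eqn:bettiNosRestr} identifies each graded Betti number of $I(G[W])$ with a graded Betti number of $I$ in the same (squarefree) multidegree; hence independence of $\beta(I)$ yields independence of $\beta(I(G[W]))$, and the converse follows.

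I expect the one genuine obstacle to be the middle step: pinning down $(K :_R x_1)$ and recognizing that its characteristic-dependence is governed by a restriction of $\beta(I)$. The reductions by disjoint variables and by the Koszul factor are routine, but one must check that precisely the edges incident to $\{x_1\} \cup N(x_1)$ drop out, leaving the induced subgraph on $W$, so that~\eqref{eqn:bettiNosRestr} applies verbatim.
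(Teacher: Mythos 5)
Your proof is correct and follows essentially the same route as the paper's: the paper likewise treats $(IR[y], x_1y)$ as the polarization of $(I, x_1^2)$ and invokes Theorem~\ref{thm:powers} (this reduction is compressed into the opening sentence ``It suffices to show that if $\beta(I)$ is independent of $\charact \Bbbk$, then $\beta((I\!:_R\!x_1))$ is independent''), and then, just as you do, identifies the colon ideal as neighbor variables plus the edge ideal of the induced subgraph on the non-neighbors and applies~\eqref{eqn:bettiNosRestr}. The only differences are that you make the implicit reduction explicit, and your formula for the colon ideal fixes a small slip in the paper, which writes $(I\!:_R\!x_1) = (I \cap \Bbbk[\sigma])R + (\sigma)R$ where the second summand should be generated by the variables \emph{not} in $\sigma$, i.e., the neighbors of $x_1$.
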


\begin{proof}
It suffices to show that if $\beta(I)$ is independent of $\charact \Bbbk$,
then $\beta((I\!:_R\!x_1))$ is independent of $\charact \Bbbk$.
Let $\sigma = \{x_i : x_1x_i \not \in I\}$.  Then $(I\!:_R\!x_1) = (I \cap
\Bbbk[\sigma])R + (\sigma)R$.  If $\beta(I)$ is independent of $\charact
\Bbbk$, then $\beta((I \cap \Bbbk[\sigma])R)$, and, hence,
$\beta((I\!:_R\!x_1))$ are independent of $\charact \Bbbk$.
\end{proof}

\begin{defn}[\protect{\cite{PrBiDecSimpCx80}*{Definition~2.1}}]
\label{defn:vertexDec}
Let $\Delta$ be a $d$-dimensional simplicial complex on a vertex set
$V$. We say that $\Delta$ is \define{vertex-decomposable} if it is
pure-dimensional and either $\Delta$ is the $d$-simplex, or there exists $x
\in V$ such that
\begin{inparaenum}
\item $\link_\Delta(x)$ is $(d-1)$-dimensional and vertex-decomposable, and
\item $\deln_{\Delta}(x)$ is $d$-dimensional and vertex-decomposable.
\end{inparaenum}
\end{defn}

Note that $\link_\Delta(x)$ is $(d-1)$-dimensional and vertex-decomposable
if and only if $\Star_{\Delta}(x)$ is $d$-dimensional and
vertex-decomposable. If $\Delta$ is vertex-decomposable, then it is
shellable and, hence, Cohen-Macaulay in all characteristics.

We say that a $R$-ideal $I$ is \define{primary} if $R/I$ has a unique
associated prime. A monomial $R$-ideal $I$ is primary (with associated
prime ideal $\mathfrak p$) if and only if $\mathfrak p$ is the radical of
$I$ and no minimal monomial generator of
$I$ is divisible by a variable not in $\mathfrak p$. (Note that $\mathfrak
p$ is generated by a subset of the variables.)

\begin{propn}
\label{propn:polVertDec}
Stanley--Reisner complexes of the polarization of primary monomial ideals
are vertex-decomposable.
\end{propn}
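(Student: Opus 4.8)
The plan is to reduce to the Artinian case and then induct, shedding one vertex at a time exactly as in the proof of Theorem~\ref{thm:powers}. Write $\mathfrak p = (x_1,\ldots,x_k)$ for the associated prime. Since no minimal generator of $I$ involves $x_{k+1},\ldots,x_n$, the ideal $I$ is extended from $S=\Bbbk[x_1,\ldots,x_k]$, in which it is $(x_1,\ldots,x_k)$-primary, hence Artinian; and $x_{k+1},\ldots,x_n$ survive as free variables of the polarization. Thus the Stanley--Reisner complex of the polarization of $I$ is the join of a simplex (on the vertices coming from $x_{k+1},\ldots,x_n$) with the complex of the polarization of the Artinian ideal. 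A cone over a vertex-decomposable complex is again vertex-decomposable (shed a vertex of the base, noting both its link and its deletion remain cones), so it suffices to treat the case that $I$ is Artinian and every variable appears. Here I record that the least power $t_i$ of $x_i$ lying in $I$ equals the top power of $x_i$ occurring among the minimal generators, so polarization replaces $x_i$ by $t_i$ vertices $x_{i,1},\ldots,x_{i,t_i}$; let $\Delta'$ be the resulting complex.

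I will induct on the pair $(k,\sum_i t_i)$ ordered lexicographically. If every $t_i=1$ then $I=\mathfrak p$ and $\Delta'$ is the $(-1)$-simplex $\{\emptyset\}$, which is vertex-decomposable. Otherwise choose a variable with $t_1\ge 2$ and, writing $y_j=x_{1,j}$, take $y_1$ as the shedding vertex (it is a genuine vertex since $x_1^{\,t_1-1}\notin I$). As in the proof of Theorem~\ref{thm:powers}, $\deln_{\Delta'}(y_1)$ is the Stanley--Reisner complex of the polarization of $I_0:=I\cap\Bbbk[x_2,\ldots,x_k]$ together with the free vertices $y_2,\ldots,y_{t_1}$, while $\link_{\Delta'}(y_1)$ is the Stanley--Reisner complex of $(I':y_1)$, the polarization of $(I:x_1)$, sitting inside the full ambient vertex set. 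Now $I_0$ is $(x_2,\ldots,x_k)$-primary, since it contains each pure power $x_i^{\,t_i}$, so its complex is vertex-decomposable by induction on $k$; and $(I:x_1)$ is $(x_1,\ldots,x_k)$-primary with strictly smaller power sum, since its $x_1$-power drops to $t_1-1$ while no other power increases, so its complex is vertex-decomposable by induction on $\sum_i t_i$. Both the deletion and the link are obtained from these by coning with the leftover free vertices, hence are themselves vertex-decomposable.

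It remains to check the purity and dimension conditions of Definition~\ref{defn:vertexDec}. Since polarization preserves Cohen--Macaulayness and an Artinian quotient is Cohen--Macaulay, $R'/I'$ is Cohen--Macaulay, so $\Delta'$ is pure of dimension $d=\sum_i t_i-k-1$ by Reisner's criterion~\cite{BrHe:CM}*{Section~5.3}. The heart of the argument is the dimension bookkeeping. For the deletion it is immediate: the simplex on $y_2,\ldots,y_{t_1}$ has dimension $t_1-2$, the complex of $I_0$ has dimension $\sum_{i\ge2}t_i-k$, and their join has dimension $d$. The delicate case is the link, and this is the one point I expect to require care: the power of $x_i$ in $(I:x_1)$ may drop strictly below $t_i$ for some $i\ge 2$, so the polarization of $(I:x_1)$ uses fewer than $\sum_i t_i-1$ vertices; but each vertex it fails to use becomes a free vertex of $\link_{\Delta'}(y_1)$ and raises its dimension by one, so the two effects cancel and the dimension is exactly $(\sum_i t_i-1)-k-1=d-1$ no matter how the powers drop. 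Granting this cancellation, $y_1$ is a shedding vertex for $\Delta'$, and the induction is complete.
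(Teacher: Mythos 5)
Your proof is correct and follows essentially the same route as the paper's: both shed the first polarization vertex $x_{1,1}$ of a suitable variable, identify the deletion with the polarization of $(\mathfrak a, x_1)$ (your $I_0$ together with free vertices) and the link/star with the polarization of $(\mathfrak a : x_1)$, observe that both ideals are again primary, and conclude by induction. The differences are only organizational: you first reduce to the Artinian case and induct on the pair $(k,\sum_i t_i)$ rather than on the poset of subcomplexes, and you carry out explicitly the purity and dimension bookkeeping (in particular the cancellation in the link's dimension) that the paper leaves implicit.
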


\begin{proof}
Let $\mathcal S$ be the set of simplicial complexes on a vertex set $V$.
This is a poset, under inclusion: $\Delta' \subseteq \Delta$ if $F \in
\Delta$ for every $F \in \Delta'$. By induction on $\mathcal S$, it
suffices to show that if $\Delta$ is the Stanley--Reisner complex of the
polarization of a primary ideal, then there exists $x \in V$ such that the
Stanley--Reisner ideals of $\Star_{\Delta}(x)$ and $\deln_{\Delta}(x)$ are
also obtained through polarization.

Let $1 \leq c \leq |V|$, and $I$ a squarefree monomial ideal with $\height
I = c$. Then $I$ is the polarization of a primary monomial ideal if and
only if there exists a partition $V = \bigsqcup_{i=1}^c \{x_{i,1}, \ldots,
x_{i, n_i}\}$ of the vertex set such that for every $1 \leq i \leq c$ and
for every generator $f$ of $I$, if $x_{i,j} \mid f$ for some $1 \leq j \leq
n_i$, then $x_{i,k} \mid f$ for every $1 \leq k \leq j$. Moreover, if this
holds, we may assume that $I$ is the polarization of an monomial ideal
primary to $(x_{1,1}, \ldots, x_{c,1})R$.

Let $\mathfrak a$ be an $(x_{1,1}, \ldots, x_{c,1})$-primary monomial ideal
and $I$ its polarization. Let $\Delta$ be the Stanley--Reisner complex of
$I$. The Stanley--Reisner ideal of $\Star_{\Delta}(x_{1,1})$ is
$(I\!:\!x_{1,1})$, which is a polarization of $(\mathfrak a\!:\!x_{1,1})$.
The Stanley--Reisner ideal of $\deln_{\Delta}(x_{1,1})$ is $(I,x_{1,1})$,
which is a polarization of $(\mathfrak a,x_{1,1})$. Both 
$(\mathfrak a\!:\!x_{1,1})$ and 
$(\mathfrak a,x_{1,1})$ are primary.
\end{proof}

\begin{remark}
\label{remark:vertDecExample}
We now see that vertex-decomposability does not ensure that Betti tables
are independent of $\charact \Bbbk$. For, let $I$ be as in
Example~\ref{example:Reisner}. Let $S = R[y_1, \ldots, y_n]$. Let $J = IS +
(x_1y_1, \ldots, x_ny_n)$; it is the polarization of $I + (x_1^2, \ldots,
x_n^2)$ which is $(x_1, \ldots, x_n)$-primary. Therefore $\Delta_J$ is
vertex-decomposable, while 
$\beta(J)$ depends on $\charact \Bbbk$, 
by Theorem~\ref{thm:powers} and
Proposition~\ref{propn:polarization}. This behaviour is already known for
shellable
complexes~\cite{TeraiHibiBettiNos96}*{Examples~3.3, 3.4}.
\end{remark}

\section{Bipartite Ideals}
\label{sec:bipartiteGraphs}

We say that a quadratic
monomial ideal $I$ is \emph{bipartite} if there exists a partition $V =
V_1 \sqcup V_2$ such that every minimal generator of $I$ is of the form
$xy$ for some $x \in V_1$ and $y \in V_2$.
Construction~\ref{constr:bipFromCx} describes all bipartite ideals. 
In Theorem~\ref{thm:homologyIsom}, we 
give a method to calculate the homology of arbitrary simplicial complexes,
similar to the method of nerve complexes.

\begin{constr}
\label{constr:coning}
Let $\Gamma$ be a simplicial complex on $V_1 \defeq \{x_1, \ldots,
x_n\}$. Let $\Gamma_j, 1 \leq j \leq m$ be a collection of simplicial
subcomplexes of $\Gamma$ such that $\Gamma = \cup_{j=1}^m \Gamma_j$.
Let $V_2 = \{y_1, \ldots, y_m\}$ be a set of $m$ new vertices.
Define 
\begin{equation}
\widetilde{\Gamma} = \left\{ \sigma \cup \tau : \sigma \in \Gamma, \tau \subseteq
\left\{y_j : \sigma \in \Gamma_j\right\}
\right\}.
\end{equation}
\end{constr}

\begin{lemma}
\label{lemma:contractible}
With notation as above, $\widetilde{\Gamma}$ is contractible.
\end{lemma}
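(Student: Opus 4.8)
The plan is to cover $\widetilde{\Gamma}$ by contractible subcomplexes and apply the Nerve Lemma. First I would record that $\widetilde{\Gamma}$ really is a simplicial complex: if $\sigma\cup\tau \in \widetilde{\Gamma}$ and $\sigma'\cup\tau' \subseteq \sigma\cup\tau$ with $\sigma'\subseteq\sigma\in\Gamma$ and $\tau'\subseteq\tau$, then $\sigma'\in\Gamma$, and for each $y_j\in\tau'\subseteq\tau$ we have $\sigma\in\Gamma_j$, whence $\sigma'\in\Gamma_j$ since $\Gamma_j$ is a subcomplex; thus $\sigma'\cup\tau'\in\widetilde{\Gamma}$. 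It is convenient to note that $\sigma\cup\tau\in\widetilde{\Gamma}$ exactly when $\sigma\in\bigcap_{y_j\in\tau}\Gamma_j$.

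The cover I would use is by the closed stars $\Star_{\widetilde{\Gamma}}(y_j)$, $1\le j\le m$. With the paper's definition, $\Star_{\widetilde{\Gamma}}(y_j) = \{\sigma\cup\tau\in\widetilde{\Gamma} : \sigma\in\Gamma_j\}$, and since any such face may be enlarged by $y_j$, this is a cone over $y_j$ and hence contractible. Because $\Gamma=\bigcup_j\Gamma_j$, every face $\sigma\cup\tau$ of $\widetilde{\Gamma}$ has $\sigma\in\Gamma_j$ for some $j$, so these stars cover $\widetilde{\Gamma}$. The key computation is the intersection pattern: for indices $j_1,\dots,j_k$,
\[
\Star_{\widetilde{\Gamma}}(y_{j_1})\cap\cdots\cap\Star_{\widetilde{\Gamma}}(y_{j_k})
= \{\sigma\cup\tau\in\widetilde{\Gamma} : \sigma\in\Gamma_{j_1}\cap\cdots\cap\Gamma_{j_k}\},
\]
which is again a cone over $y_{j_1}$. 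The crucial point, and the step I expect to carry the weight of the argument, is that this intersection is never empty: since each subcomplex $\Gamma_j$ contains the empty face, $\emptyset\in\Gamma_{j_1}\cap\cdots\cap\Gamma_{j_k}$, so the vertex $y_{j_1}$ lies in the intersection. Thus every finite intersection of cover members is a nonempty cone, hence contractible.

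With all nonempty intersections contractible, the Nerve Lemma (see~\cite{HatcAlgTop02}) gives $\widetilde{\Gamma}\simeq\mathcal{N}$, the nerve of the cover; but nonemptiness of every intersection means every subset of $\{y_1,\dots,y_m\}$ is a face of $\mathcal{N}$, so $\mathcal{N}$ is the full simplex on $m$ vertices, which is contractible, and therefore so is $\widetilde{\Gamma}$. The one subtlety to guard against is exactly the observation above: naively one might fear that $\mathcal{N}$ recovers the nerve of the cover $\{\Gamma_j\}$ of $\Gamma$ (which is typically \emph{not} contractible, as it reproduces the homotopy type of $\Gamma$), but adjoining the apexes $y_j$ over the empty face forces all the higher intersections to be nonempty and collapses the nerve to a simplex. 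As an alternative in the spirit of the paper's use of discrete Morse theory, one could instead build an acyclic matching on the faces of $\widetilde{\Gamma}$ that, within the fiber over each $\sigma\in\Gamma$, toggles the vertex $y_{\mu(\sigma)}$ with $\mu(\sigma)=\min\{j:\sigma\in\Gamma_j\}$; the cluster (patchwork) lemma yields acyclicity, and the only unmatched cell is the single vertex $y_{\mu(\emptyset)}$, again giving contractibility.
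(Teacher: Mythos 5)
Your main argument is correct, and it takes a genuinely different route from the paper. The paper proves the lemma with discrete Morse theory: it partitions the faces of $\widetilde{\Gamma}$ into the fibers $\mathcal F_\sigma = \{\sigma\cup\tau : \tau \subseteq Y_\sigma\}$, matches $\sigma\cup\tau$ with $\sigma\cup\tau\cup\{y_{j}\}$ for the least $j$ with $y_j \in Y_\sigma$, checks acyclicity by hand via the function $j_F$, and invokes Forman's theorem. Your nerve-lemma proof replaces that machinery with classical topology: the closed stars $\Star_{\widetilde{\Gamma}}(y_j)$ are cones, your computation of their intersections is right, and the observation that the empty face of each $\Gamma_j$ forces \emph{every} finite intersection to be a nonempty cone is exactly the point that collapses the nerve to a full simplex --- this is also where you correctly flag the pitfall (the nerve of $\{\Gamma_j\}$ covering $\Gamma$ itself would not be contractible in general). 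Two minor points. First, your citation is imprecise: Hatcher's nerve theorem (Corollary 4G.3) is stated for \emph{open} covers of paracompact spaces; for a cover of a simplicial complex by subcomplexes you want the simplicial nerve theorem (e.g.\ Bj\"orner's version in the \emph{Handbook of Combinatorics}), though the result is standard and the gap is purely bibliographic. Second, you need each $\Gamma_j$ to contain the empty face (i.e.\ to be non-void), since otherwise $y_j$ is not a vertex of $\widetilde{\Gamma}$ and its star is empty; you state this assumption explicitly, and it is consistent with the paper's convention (its remark allows $\Gamma_j = \{\varnothing\}$ but not the void complex). As trade-offs: the paper's matching argument yields collapsibility, a stronger combinatorial conclusion, and keeps the whole paper within the discrete Morse toolkit it has already set up; your argument is shorter, uses only standard homotopy theory, and isolates the structural reason for contractibility (all intersections share a cone point). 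Your closing sketch of an acyclic matching with the single critical vertex $y_{\mu(\varnothing)}$ is essentially the paper's own proof, differing only in whether the empty face is treated as a matchable cell.
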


\begin{proof}
We prove this using discrete Morse theory developed by
R.~Forman~\cites{FormanDMT98}. Refer to the exposition in
\cite{FormanUsersGuideDMT02} for unexplained terminology.  Specifically, we
will exhibit a complete acyclic matching on the Hasse diagram of $\widetilde{\Gamma}$;
see~\cite{ChariDMT00}*{Section~3}
and~\cite{FormanUsersGuideDMT02}*{Section~6} for the interpretation of
acyclic matchings of the Hasse diagram in terms of discrete Morse theory.

Let $\sigma \in \Gamma$. Let $Y_\sigma = \{y_j : \sigma \in \Gamma_j\}$ and
$\mathcal F_\sigma = \{ \sigma \cup \tau : \tau \subseteq Y_\sigma\}$. Then
$\widetilde{\Gamma} = \bigsqcup_{\sigma \in \Gamma} \mathcal F_\sigma$ is a
partition. Let $j$ be the smallest integer such that $y_j \in Y_\sigma$. We
define a complete matching on $\mathcal F_\sigma$ by connecting $\sigma
\cup \tau$ with $\sigma \cup \tau \cup \{y_j\}$ for all $\tau \subseteq
Y_\sigma$ with $y_j \not \in \tau$. Repeating this for all $\sigma \in
\Gamma$, we obtain a complete matching of the Hasse diagram of
$\widetilde{\Gamma}$. We now claim that this is an acyclic matching. Assume
the claim; then $\widetilde{\Gamma}$ is contractible,
by~\cite{FormanUsersGuideDMT02}*{Theorem~6.4}.

To prove the claim, we let, for a face $F$ of $\widetilde{\Gamma}$, 
\[
j_F = 
\begin{cases}
\min \{j : y_j \in F\}, & \text{if there exists}\; j \;\text{such that}\;
y_j \in F \\
\infty, & \text{otherwise}.
\end{cases}
\]
Let $F \rightarrow F' \rightarrow F''$ be edges 
in the Hasse diagram (modified, as in~\cite{
FormanUsersGuideDMT02}*{Section~6}, to include the matchings),
such that one of them is an
up arrow and the other is a down arrow. Then $j_F > j_{F''}$.
Since every edge in the Hasse diagram connects two faces whose sizes differ
exactly by one, we see that every cycle has an even number of edges. Since
no two up arrows share a vertex (the up arrows form the matching), the up
and the down arrows alternate in every directed cycle. Hence the Hasse
diagram does not have directed cycles.
\end{proof}

\begin{remark}
Note that there may exists $j$ such that $\Gamma_j = \{\varnothing\}$.
\end{remark}

\begin{constr}
\label{constr:bipFromCx}
Let $\Gamma$ be a simplicial complex on $V_1 \defeq \{x_1, \ldots,
x_n\}$. Denote the number of facets of $\Gamma$ by $m$. Let $G_j, 1 \leq j
\leq m$ be such
that for all $1 \leq j \leq m$, $V_1 \minus G_j$ is a face of $\Gamma$ and
such that every facet of $\Gamma$ is of the form $V_1 \minus G_j$ for some
$j$. Let $y_1, \ldots, y_m$ be new vertices. Let $\simplex_{V_1}$
be the $(n-1)$-simplex on $x_1, \ldots, x_n$. Define 
\begin{equation}
\Delta' = \left\{ \sigma \cup \tau : \sigma \in \Gamma, \tau \subseteq
\left\{y_j : \sigma \subseteq \left(V_1 \minus G_j\right) \right\}
\right\} \qquad \text{and}\qquad
\Delta = \Delta' \bigcup \simplex_{V_1}.
\end{equation}
Let $I$ be the Stanley--Reisner ideal of $\Delta$, in the ring $R =
\Bbbk[x_1, \ldots, x_n, y_1, \ldots, y_m]$. Let $I_\Gamma$ denote the
extension of the Stanley--Reisner ideal of $\Gamma$ from the ring
$\Bbbk[x_1, \ldots, x_n]$ to $R$.
\end{constr}

\begin{propn}
\label{propn:primaryDec}
With notation as above, 
$I = \left(x_iy_j : 1 \leq j \leq m, x_i \in G_j\right)$. Moreover,
$I = (I + I_\Gamma) \cap (y_1, \ldots, y_m)$. Hence the Stanley--Reisner
ideal of $\Delta'$ is $(I + I_\Gamma)$.
\end{propn}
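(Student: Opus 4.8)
The plan is to translate everything into the Stanley--Reisner dictionary and read off minimal nonfaces. The one combinatorial fact I would establish first, and then use repeatedly, is the following reformulation of membership in $\Gamma$: for $\sigma \subseteq V_1$ one has $\sigma \in \Gamma$ if and only if $\sigma \cap G_j = \varnothing$ for some $j$; equivalently, $\sigma \notin \Gamma$ if and only if $\sigma$ meets every $G_j$. This is immediate from the hypothesis that the facets of $\Gamma$ are exactly the sets $V_1 \minus G_j$: a face of $\Gamma$ is contained in some facet $V_1 \minus G_j$, which says precisely that $\sigma \cap G_j = \varnothing$.

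For the first claim I would describe the faces of $\Delta$ explicitly. Writing a subset as $\sigma \cup \tau$ with $\sigma \subseteq V_1$ and $\tau \subseteq V_2 \defeq \{y_1, \ldots, y_m\}$, the set $\sigma \cup \tau$ is a face of $\Delta$ precisely when either $\tau = \varnothing$ (always a face, coming from $\simplex_{V_1}$), or $\sigma \in \Gamma$ and $\sigma \cap G_j = \varnothing$ for every $y_j \in \tau$ (coming from $\Delta'$). From this I would check that $\{x_i, y_j\}$ is a nonface exactly when $x_i \in G_j$, and that such a pair is a minimal nonface because both of its vertices are faces. The crux is then to show that every nonface $\sigma \cup \tau$ contains such a pair: if some $y_j \in \tau$ has $\sigma \cap G_j \neq \varnothing$, any $x_i \in \sigma \cap G_j$ does the job; otherwise $\sigma \notin \Gamma$, and by the reformulation $\sigma$ meets every $G_j$, in particular the $G_j$ of any $y_j \in \tau$, again producing a pair. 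Hence the minimal nonfaces of $\Delta$ are exactly the $\{x_i, y_j\}$ with $x_i \in G_j$, giving $I = (x_i y_j : x_i \in G_j)$.

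The second claim I would verify on monomials, since all the ideals in sight are monomial. The inclusion $I \subseteq (I + I_\Gamma) \cap (y_1, \ldots, y_m)$ is clear, because each generator $x_i y_j$ lies in both $I$ and $(y_1, \ldots, y_m)$. For the reverse inclusion, let $u$ be a monomial divisible by some $y_j$ and lying in $I + I_\Gamma$. If $u$ is divisible by a generator of $I$ we are done; otherwise $u$ is divisible by a generator $\prod_{x \in \rho} x$ of $I_\Gamma$, where $\rho$ is a minimal nonface of $\Gamma$. Since $\rho \notin \Gamma$, the reformulation gives $\rho \cap G_j \neq \varnothing$ for the index $j$ with $y_j \mid u$; choosing $x_i \in \rho \cap G_j$ shows $x_i y_j \mid u$ with $x_i \in G_j$, so $u \in I$.

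For the last assertion I would exploit the fact that $\Delta$ and $\Delta'$ have identical faces among sets that meet $V_2$: a set $\sigma \cup \tau$ with $\tau \neq \varnothing$ cannot lie in $\simplex_{V_1}$, so it belongs to $\Delta$ if and only if it belongs to $\Delta'$. The two complexes differ only on subsets of $V_1$, where $\Delta$ is the full simplex while the restriction of $\Delta'$ to $V_1$ is $\Gamma$. Consequently the minimal nonfaces of $\Delta'$ split into those meeting $V_2$---which coincide with the minimal nonfaces of $\Delta$, namely the pairs $\{x_i, y_j\}$ with $x_i \in G_j$, contributing $I$---and those contained in $V_1$, which are exactly the minimal nonfaces of $\Gamma$, contributing $I_\Gamma$. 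This yields that the Stanley--Reisner ideal of $\Delta'$ is $I + I_\Gamma$. I expect the main obstacle to be the combinatorial collapse in the first claim: one must use the facet hypothesis on $\Gamma$ to rule out ``large'' minimal nonfaces such as $\rho \cup \{y_j\}$ with $\rho$ a minimal nonface of $\Gamma$, and then keep the minimality bookkeeping straight when transferring minimal nonfaces between $\Delta$ and $\Delta'$.
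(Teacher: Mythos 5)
Your proofs of the first two claims are correct and follow essentially the same route as the paper: show each pair $\{x_i,y_j\}$ with $x_i \in G_j$ is a minimal nonface of $\Delta$, show every nonface of $\Delta$ contains such a pair, and verify the intersection identity monomial by monomial. Your preliminary reformulation ($\sigma \in \Gamma$ if and only if $\sigma \cap G_j = \varnothing$ for some $j$) is a clean packaging of the facet hypothesis that the paper uses only implicitly, and it is proved correctly.

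The last step, however, rests on a false intermediate claim: that the minimal nonfaces of $\Delta'$ meeting $V_2$ \emph{coincide} with the pairs $\{x_i,y_j\}$, $x_i \in G_j$. Only one inclusion holds. The construction permits a vertex $x_i$ lying in \emph{every} $G_j$ (equivalently $\{x_i\} \notin \Gamma$, i.e., $x_i$ belongs to no facet of $\Gamma$); then $\{x_i\}$ is itself a nonface of $\Delta'$, so no pair containing $x_i$ is a \emph{minimal} nonface of $\Delta'$. Concretely, take $V_1 = \{x_1,x_2\}$ and $\Gamma$ with unique facet $\{x_2\}$, so $m=1$ and $G_1 = \{x_1\}$: the only minimal nonface of $\Delta'$ is $\{x_1\}$, and none meets $V_2$, whereas your claim predicts $\{x_1,y_1\}$. (For $\Delta$ there is no such problem, because $\simplex_{V_1} \subseteq \Delta$ makes every $\{x_i\}$ a face; this is exactly why your first claim is fine but does not transfer verbatim to $\Delta'$.) The conclusion nevertheless survives, since the offending generators are redundant: if $x_i$ lies in every $G_j$, then $x_i \in I_\Gamma$, so $x_iy_j \in I_\Gamma$ anyway. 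The repair is short. Writing $I_{\Delta'}$ for the Stanley--Reisner ideal of $\Delta'$, the direction you did prove (a minimal nonface of $\Delta'$ meeting $V_2$ is a minimal nonface of $\Delta$, hence a pair, because $\Delta' \subseteq \Delta$ and the two complexes have the same faces meeting $V_2$) gives $I_{\Delta'} \subseteq I + I_\Gamma$; for the reverse inclusion observe that every pair and every nonface of $\Gamma$ is a nonface of $\Delta'$, so its monomial lies in $I_{\Delta'}$ whether or not that nonface is minimal. For comparison, the paper sidesteps minimal nonfaces of $\Delta'$ entirely: it deduces the last assertion from the already-proved identity $I = (I+I_\Gamma) \cap (y_1,\ldots,y_m)$ together with the correspondence between intersections of Stanley--Reisner ideals and unions of complexes.
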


\begin{proof}
We will first show that the minimal nonfaces of $\Delta$ are precisely
$\{x_i,y_j\}, 1 \leq j \leq m, x_i \in G_j$.  It follows from the
definition of $\Delta$ that for every $1 \leq j \leq m$ and $x_i \in G_j$, 
$\{x_i,y_j\}$ is a nonface.
Observe that $\{x_1, \ldots, x_n\}$ and $\{y_1, \ldots, y_m\}$ are faces of
$\Delta$. Let $\sigma \cup \tau$ with $\sigma \subseteq \{x_1, \ldots,
x_n\}$ and $\tau \subseteq \{y_1, \ldots, y_m\}$ be a minimal nonface of
$\Delta$. Hence $\sigma \neq \varnothing \neq \tau$. Therefore there exists
$y_j \in \tau$ such that $\sigma \not \in V_1 \minus G_j$. Let $x_i
\in \sigma \cap G_j$. Now, $\{x_i, y_j\} \subseteq \sigma \cup \tau$; by
minimality of $\sigma \cup \tau$ we conclude that 
$\sigma \cup \tau = \{x_i, y_j\}$. 

In order to prove that 
$I = (I + I_\Gamma) \cap (y_1, \ldots, y_m)$, it suffices to show that $f
\in I$ for all monomials $f \in I_\Gamma \cap (y_1, \ldots, y_m)$. Since
the generators of $I_\Gamma$ are monomials in $V_1$, write $f =
f'y_j$ for some $f' \in I_\Gamma$. Let $f'$ correspond to a nonface
$\sigma$ of $\Gamma$. Therefore $\sigma \cup\{y_j\}$ is a nonface of
$\Delta$, so $f \in I$.

Note that $(I + I_\Gamma) \nsubseteq (y_1, \ldots, y_m)$. Hence the
intersection $(I + I_\Gamma) \cap (y_1, \ldots, y_m)$ corresponds to the
union $\Delta' \cup \simplex_{V_1}$;
see~\cite {MiStCCA05}*{Theorem~1.7}.
Therefore the Stanley--Reisner ideal of
$\Delta'$ is $(I + I_\Gamma)$.
\end{proof}

\begin{remark}
Every bipartite $R$-ideal $I$, with the partition $\{x_1, \ldots, x_n\}
\sqcup \{y_1, \ldots y_m\}$, arises through
Construction~\ref{constr:bipFromCx}. Write $I = \left(x_iy_j : 1 \leq j
\leq m, x_i \in G_j\right)$, where the $G_j$ are subsets of $V_1$.
Let
\[
J = \left(\prod_{x_i \in F} x_i : F \cap G_j \neq \varnothing
\;\text{for all}\; j\right) = \left(\prod_{x_i \in F} x_i : F \cap G_j \neq
\varnothing \;\text{for all minimal}\; G_j \right).
\]
Then $I = (I+J) \cap (y_1, \ldots y_m)$. Let $\Gamma$ be the
Stanley--Reisner complex of $J$ on the vertex set $V_1$. 
The facets of $\Gamma$ are $V_1 \minus G_j$ for $G_j$ minimal. To see
this, it suffices to show that 
\[
\ass(R/J) = \{(G_j)R : G_j \;\text{minimal}\}
\]
or, equivalently, that 
\[
J  = \left(\prod_{\substack{x \in G_j \\ G_j \;\text{minimal}}}
x\right)^\vee \qquad\qquad (\text{here}\; (-)^\vee \;\text{denotes taking the
Alexander dual})
\]
which follows from the definition of $J$
and~\cite{FariFacetIdeal02}*{Proposition~1}. 
Now apply Construction~\ref{constr:bipFromCx}
with the $G_j$ as above.
\end{remark}

\begin{thm}
\label{thm:homologyIsom}
Let $\Gamma$ and $\Delta$ be as in Construction~\ref{constr:bipFromCx}.
Then for all $i \geq 0$, $\rhomo_{i+1}(\Delta; \ints) \simeq 
\rhomo_i(\Gamma; \ints)$.
\end{thm}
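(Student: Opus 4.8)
The plan is to exhibit $\Delta$ as a union of two contractible subcomplexes whose intersection is $\Gamma$, and then extract the isomorphism from the reduced Mayer--Vietoris sequence; this is a different application of Mayer--Vietoris from the one in Discussion~\ref{discussion:MV}, which used the star/deletion decomposition.

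First I would recognize that $\Delta'$ is exactly the complex $\widetilde\Gamma$ produced by Construction~\ref{constr:coning}, applied with $\Gamma_j$ taken to be the full simplex on the face $V_1 \minus G_j$, that is $\Gamma_j = \{\sigma : \sigma \subseteq V_1 \minus G_j\}$. Each such $\Gamma_j$ is a subcomplex of $\Gamma$ because $V_1 \minus G_j$ is a face of $\Gamma$, and $\Gamma = \bigcup_{j=1}^m \Gamma_j$ precisely because the facets of $\Gamma$ are the sets $V_1 \minus G_j$, so every face of $\Gamma$ lies in some facet. Under this identification the condition $\sigma \in \Gamma_j$ reads $\sigma \subseteq V_1 \minus G_j$, so the defining formula for $\widetilde\Gamma$ becomes the defining formula for $\Delta'$. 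Lemma~\ref{lemma:contractible} then gives that $\Delta'$ is contractible; and $\simplex_{V_1}$, being a single simplex, is contractible as well.

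Next I would compute the intersection $\Delta' \cap \simplex_{V_1}$. Writing a face of $\Delta'$ as $\sigma \cup \tau$ with $\sigma \in \Gamma$ and $\tau \subseteq \{y_j : \sigma \subseteq V_1 \minus G_j\}$, such a face lies in $\simplex_{V_1}$ iff it contains none of the $y_j$, i.e.\ iff $\tau = \varnothing$; what remains is precisely the set of $\sigma \in \Gamma$. Hence $\Delta' \cap \simplex_{V_1} = \Gamma$ (a nonempty complex, as $\Gamma$ has facets), while $\Delta = \Delta' \cup \simplex_{V_1}$ by construction. Feeding this decomposition into the reduced Mayer--Vietoris sequence and using $\rhomo_*(\Delta';\ints) = \rhomo_*(\simplex_{V_1};\ints) = 0$, the two terms flanking $\rhomo_{i+1}(\Delta;\ints)$ and $\rhomo_i(\Gamma;\ints)$ vanish, leaving the short exact pieces
\[
0 \to \rhomo_{i+1}(\Delta;\ints) \xrightarrow{\ \partial\ } \rhomo_i(\Gamma;\ints) \to 0
\]
for every $i \geq 0$; thus the connecting homomorphism $\partial$ is the desired isomorphism $\rhomo_{i+1}(\Delta;\ints) \simeq \rhomo_i(\Gamma;\ints)$.

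I do not anticipate a serious obstacle. The one step that carries the whole argument, and hence needs to be checked with care, is the identification of $\Delta'$ with an instance of Construction~\ref{constr:coning}, since it is this identification that lets Lemma~\ref{lemma:contractible} supply the contractibility of $\Delta'$. Once that is in place, the intersection computation and the collapse of the Mayer--Vietoris sequence are routine.
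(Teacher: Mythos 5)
Your proposal is correct and follows essentially the same route as the paper: decompose $\Delta = \Delta' \cup \simplex_{V_1}$, observe $\Delta' \cap \simplex_{V_1} = \Gamma$, invoke Lemma~\ref{lemma:contractible} for the contractibility of $\Delta'$, and conclude via the reduced Mayer--Vietoris sequence. The only difference is one of exposition: you spell out the identification of $\Delta'$ with $\widetilde\Gamma$ from Construction~\ref{constr:coning} (taking each $\Gamma_j$ to be the full simplex on $V_1 \minus G_j$), a step the paper leaves implicit when it cites the lemma.
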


\begin{proof}
Notice that $\Delta' \cap \Delta_{V_1} = \Gamma$, or, equivalently, that
$(I + I_\Gamma) + (y_1, \ldots, y_m) = I_\Gamma + (y_1, \ldots, y_m)$.
From the Mayer--Vietoris sequence on
homology~\cite{HatcAlgTop02}*{Section~2.2},
it suffices to prove that $\rhomo_i(\Delta';\ints) = 0$ for all $i \geq 0$.
This follows from Lemma~\ref{lemma:contractible}.
\end{proof}

J.~Herzog raised the question whether the Betti tables of bipartite
ideals are independent of the characteristic. 

\begin{example}
Let $R = \ints[x_1, \ldots, x_6, y_1, \ldots, y_{10}]$. Let $\Gamma$ be the
minimal triangulation of $\mathbb{RP}^2$ on the vertices $x_1, \ldots,
x_6$, given in Example~\ref{example:Reisner} (and called $\Delta$ there).
By Theorem~\ref{thm:homologyIsom}, $\rhomo_*(\Delta; \ints)$ is not
torsion-free so, $\beta(I)$ depends on $\charact \Bbbk$.
\end{example}

\section{Componentwise linear resolutions}
\label{sec:cancellation}

We look at consecutive cancellation in Betti tables, and use it to show
that the Betti tables of ideals with componentwise linear resolution are
independent of the characteristic. For $t \in \naturals$, we write $(I_t)R$
for the ideal generated by the vector space $I_t$ of polynomials of degree
$t$ in $I$. We say that the resolution of $I$ is \define{$t$-linear} if $I
= (I_t)R$ and $\beta_{i,j}(I) = 0$ for all $j \neq i + t$ and for all $i$.
We say that an $R$-ideal $I$ has a \define{componentwise linear resolution}
(see~\cite{HeHiCptLin99}) if, for all $t \in
\naturals$, the resolution of $(I_t)R$ is $t$-linear.
\begin{thm}
\label{thm:cpntLinear}
Suppose that $I$ is a monomial $R$-ideal that has a componentwise linear
resolution, in all characteristics. Then $\beta(I)$ does not
depend on $\charact \Bbbk$.
\end{thm}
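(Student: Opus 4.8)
The plan is to reduce the statement to the rigidity principle that an ideal (or module) with a linear resolution has graded Betti numbers completely determined by its Hilbert function, and then to express each graded Betti number of $I$ as a difference of such numbers. Throughout, the point that makes everything work is that the Hilbert function of a monomial ideal is a purely combinatorial quantity, so it is independent of $\charact\Bbbk$; the task is to show that the Betti numbers we care about are controlled by these Hilbert functions.

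First I would isolate the key lemma: if a graded $R$-module $M$ has a $d$-linear resolution, then every $\beta_{i,i+d}(M)$ is determined by the Hilbert series $H_M(s)$. Indeed, since $\beta_{i,k}(M)=0$ for $k\neq i+d$, the usual expression of the Hilbert series in terms of graded Betti numbers collapses to
\[
H_M(s)\,(1-s)^{|V|} \;=\; s^{d}\sum_{i\geq 0}(-1)^i\,\beta_{i,i+d}(M)\,s^{i},
\]
so the coefficient of $s^{d+i}$ on the right is $(-1)^i\beta_{i,i+d}(M)$. Thus each graded Betti number is read off, with a known sign, from a single coefficient of the polynomial $H_M(s)(1-s)^{|V|}$, hence is determined by $H_M$. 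When $M$ is a monomial ideal, $H_M$ is independent of $\charact\Bbbk$, so $\beta(M)$ is independent of $\charact\Bbbk$ as soon as $M$ has a linear resolution in every characteristic.

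Next I would apply this to the components of $I$. By hypothesis $(I_j)R$ has a $j$-linear resolution in every characteristic, so by the lemma its Betti table is independent of $\charact\Bbbk$. I also need the monomial ideal $\mathfrak m(I_{j-1})R$: from the short exact sequence $0\to \mathfrak m(I_{j-1})R \to (I_{j-1})R \to (I_{j-1})R/\mathfrak m(I_{j-1})R\to 0$, in which the cokernel is a direct sum of copies of $\Bbbk$ concentrated in degree $j-1$, a standard regularity estimate gives $\reg\big(\mathfrak m(I_{j-1})R\big)\le j$; since this ideal is generated in degree $j$, it too has a $j$-linear resolution in every characteristic, and hence a characteristic-independent Betti table. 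Finally, for a componentwise linear ideal one has the decomposition
\[
\beta_{i,i+j}(I) \;=\; \beta_{i,i+j}\big((I_j)R\big) \;-\; \beta_{i,i+j}\big(\mathfrak m(I_{j-1})R\big)
\]
for all $i$ and $j$, from~\cite{HeHiCptLin99}. Since both terms on the right are independent of $\charact\Bbbk$, so is every $\beta_{i,i+j}(I)$, which is the claim.

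The main obstacle is the decomposition formula in the last display: this is precisely where the componentwise linear hypothesis enters, and it must be known to hold in each fixed characteristic so that the equality can be applied characteristic by characteristic. I would either quote it from the work of Herzog and Hibi on componentwise linear ideals, or re-derive it from the long exact sequence in $\tor$ attached to the filtration of $I$ by the ideals generated by its elements of degree at most $j$, using that the linearity of the resolutions of successive components forces the relevant connecting maps off the linear strand to vanish. By contrast, the verification that $\mathfrak m(I_{j-1})R$ has a linear resolution via the regularity bound, and the observation that all Hilbert functions involved are combinatorial, are routine and characteristic-free.
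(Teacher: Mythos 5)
Your proof is correct, but it follows a genuinely different route from the paper's. The paper argues by induction on $\reg I - d(I)$ via Proposition~\ref{thm:keyArgStrands}: the truncation principle (Lemma~\ref{lemma:trunc}) shows that $\beta_{i,j}(I)$ agrees with $\beta_{i,j}(I \cap \mathfrak m^{d(I)+1})$ for $j > i+d(I)+1$, Lemma~\ref{lemma:tLinearStr} identifies the linear strand of $I$ with that of $(I_{d(I)})R$, and Proposition~\ref{thm:flatnessAndIndependence} (the rational Betti table is obtained from any mod-$p$ Betti table by consecutive cancellations) rules out any remaining dependence, since no consecutive cancellation can occur inside a single linear strand. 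You instead invoke the Herzog--Hibi decomposition $\beta_{i,i+j}(I)=\beta_i\bigl((I_j)R\bigr)-\beta_i\bigl(\mathfrak m (I_{j-1})R\bigr)$ from~\cite{HeHiCptLin99}, valid characteristic by characteristic under the hypothesis, and you replace the consecutive-cancellation argument by Hilbert-series rigidity: for a monomial ideal with a linear resolution, the Betti numbers are read off from the Hilbert function, which is combinatorial and hence characteristic-free. Your regularity argument that $\mathfrak m(I_{j-1})R$ inherits a $j$-linear resolution is also correct (it is likewise a lemma of Herzog--Hibi). What your route buys is brevity and independence from the flatness-over-$\ints$ machinery; what it costs is that the essential content is outsourced to Herzog--Hibi's decomposition formula, a nontrivial external result that you only sketch how to re-derive. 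The paper's route is self-contained, its Proposition~\ref{thm:keyArgStrands} is strictly more general (it applies to any class of monomial ideals closed under truncation whose linear strands are characteristic-free), and Proposition~\ref{thm:flatnessAndIndependence} has independent interest and is reused for the paper's other applications (chordal graphs, pure resolutions).
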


\begin{defn}[\cite{PeevConsecCanc04}]
\label{defn:cancellation}
Let $\beta$ and $\beta'$ be Betti tables. We say that $\beta'$ is obtained
from $\beta$ by a \define{consecutive cancellation} if there exists $i,j$
such that $\beta'_{i,j} = \beta_{i,j}-1$, $\beta'_{i+1,j} =
\beta_{i+1,j}-1$ and $\beta'_{k,l} = \beta_{k,l}$ if $(k,l) \neq (i,j)$ and 
$(k,l) \neq (i+1,j)$.
\end{defn}

For instance, in Example~\ref{example:Reisner}, the Betti table of $R/I$ in
characteristic $0$ is obtained from its Betti table in characteristic $2$
by a consecutive cancellation; we have $i=3$ and $j=6$.

\begin{propn}
\label{thm:flatnessAndIndependence}
Let $A = \ints[x_1, \ldots, x_n]$.
Let $\mathfrak a$ be a
homogeneous $A$-ideal such that every integer is a nonzerodivisor on
$A/\mathfrak a$. Then, for all primes $p$, $\beta^{A\otimes_\ints
\rationals}\left( (A/\mathfrak a) \otimes_\ints \rationals \right)$ can be
obtained from $\beta^{A/pA}\left( (A/\mathfrak a) \otimes_\ints
(\ints/p\ints) \right)$ by a sequence of consecutive cancellations.
\end{propn}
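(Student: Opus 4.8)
The plan is to reduce everything to a single complex of free abelian groups and then feed it into the universal coefficient theorem. First note that the hypothesis that every integer is a nonzerodivisor on $A/\mathfrak a$ says exactly that $A/\mathfrak a$ is torsion-free, hence flat, over $\ints$; since each graded piece $(A/\mathfrak a)_d$ is a finitely generated torsion-free abelian group, it is in fact free. Let $K_\bullet$ be the Koszul complex on $x_1,\ldots,x_n$ over $A$, which resolves $\ints = A/(x_1,\ldots,x_n)$ since $x_1,\ldots,x_n$ is a regular sequence, and set $C_\bullet = K_\bullet \otimes_A (A/\mathfrak a)$. Then in each internal degree $j$ the term $(C_i)_j$ is a finitely generated \emph{free} abelian group. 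For a field $k$, base change of tensor products gives an isomorphism of complexes $C_\bullet \otimes_\ints k \cong K_\bullet^{A\otimes_\ints k}\otimes_{A\otimes_\ints k}\left((A/\mathfrak a)\otimes_\ints k\right)$, where $K_\bullet^{A\otimes_\ints k}$ is the Koszul complex over $A\otimes_\ints k = k[x_1,\ldots,x_n]$; since the latter resolves $k$, I obtain $\homology_i(C_\bullet \otimes_\ints k)\cong \tor_i^{A\otimes_\ints k}\left(k,(A/\mathfrak a)\otimes_\ints k\right)$, so that $\dim_k \homology_i(C_\bullet\otimes_\ints k)_j$ is the graded Betti number $\beta^{A\otimes_\ints k}_{i,j}$. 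Taking $k=\rationals$ and $k=\ints/p\ints$ recovers the two Betti tables in the statement.

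Second, because $C_\bullet$ is a complex of free abelian groups in each internal degree (with finitely generated homology in each degree, as each strand is a bounded complex of finite free $\ints$-modules), I apply the universal coefficient theorem to each graded strand:
\[
0 \to \homology_i(C_\bullet)_j\otimes_\ints k \to \homology_i(C_\bullet\otimes_\ints k)_j \to \tor_1^\ints\left(\homology_{i-1}(C_\bullet)_j, k\right)\to 0 .
\]
Write the integral homology as $\homology_i(C_\bullet)_j\cong \ints^{b_{i,j}}\oplus T_{i,j}$ with $T_{i,j}$ finite, and set $c_{i,j}=\dim_{\ints/p\ints}(T_{i,j}\otimes_\ints \ints/p\ints)=\dim_{\ints/p\ints}\tor_1^\ints(T_{i,j},\ints/p\ints)$. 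Flatness of $\rationals$ kills both torsion and the $\tor_1$ term, while for $\ints/p\ints$ the two contributions survive, giving
\[
\beta^{A\otimes_\ints\rationals}_{i,j} = b_{i,j}, \qquad
\beta^{A/pA}_{i,j} = b_{i,j} + c_{i,j} + c_{i-1,j}.
\]
The decisive feature is that each torsion number $c_{i,j}$ enters the characteristic-$p$ table in exactly two places: once in $\beta^{A/pA}_{i,j}$ (through $T_{i,j}\otimes_\ints \ints/p\ints$) and once in $\beta^{A/pA}_{i+1,j}$ (through $\tor_1^\ints(T_{i,j},\ints/p\ints)$), at the \emph{same} internal degree $j$ and in consecutive homological degrees $i$ and $i+1$.

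Finally I translate these formulas into cancellations. For every pair $(i,j)$ I perform $c_{i,j}$ consecutive cancellations at $(i,j)$, each lowering $\beta_{i,j}$ and $\beta_{i+1,j}$ by one. The total decrease of the entry in position $(i,j)$ is then $c_{i,j}$ (from cancellations based at $(i,j)$) together with $c_{i-1,j}$ (from cancellations based at $(i-1,j)$, which lower $\beta_{(i-1)+1,j}=\beta_{i,j}$), turning $b_{i,j}+c_{i,j}+c_{i-1,j}$ into $b_{i,j}$. Since every entry decreases monotonically from its characteristic-$p$ value down to the nonnegative integer $b_{i,j}$, all intermediate values stay $\ge b_{i,j}\ge 0$, so before each step the two affected entries are still positive and each step is a legitimate consecutive cancellation; the order of the cancellations is immaterial. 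The net result is precisely $\beta^{A\otimes_\ints\rationals}$.

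The routine but essential point to check carefully is the index bookkeeping in the two displayed Betti formulas: one must confirm that the $\tor_1$ term of the universal coefficient theorem, which comes from $\homology_{i-1}$, contributes in homological degree $i$ while preserving the internal degree $j$. This is exactly what forces the discrepancy between the two tables to fit the shape of a consecutive cancellation (same $j$, adjacent $i$) rather than any other move, and it is the only place where the precise definition of consecutive cancellation is used. The other step meriting care is the identification $\homology_i(C_\bullet\otimes_\ints k)\cong \tor_i^{A\otimes_\ints k}(k,(A/\mathfrak a)\otimes_\ints k)$, which rests on the $\ints$-flatness of $A/\mathfrak a$ and on base change of the Koszul complex; neither presents a genuine obstacle.
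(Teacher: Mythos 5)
Your proof is correct, but it follows a genuinely different route from the paper. The paper's argument is resolution-theoretic: it lifts a minimal graded free resolution $\mathbb F_\bullet$ of $(A/\mathfrak a)\otimes_\ints \ints_{(p)}$ over $A\otimes_\ints\ints_{(p)}$, observes (using flatness) that $\mathbb F_\bullet\otimes\ints/p\ints$ stays minimal and exact, so $\beta(\mathbb F_\bullet)$ is the characteristic-$p$ table, and then base-changes to $\rationals$, where $\mathbb F_\bullet\otimes_{\ints_{(p)}}\rationals$ is a possibly non-minimal resolution; splitting off the trivial summand $\mathbb G'_\bullet$ via the structure theorem for resolutions (\cite{eiscommalg}*{Theorem~20.2}) is exactly a sequence of consecutive cancellations. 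Your argument instead runs both Betti tables through a single complex of free abelian groups --- the Koszul complex of $A$ tensored with $A/\mathfrak a$ --- and compares them via the universal coefficient theorem applied strand by strand. What your approach buys is explicitness: you obtain the closed formulas $\beta^{\rationals}_{i,j}=b_{i,j}$ and $\beta^{p}_{i,j}=b_{i,j}+c_{i,j}+c_{i-1,j}$, so the cancellations are not merely shown to exist but are counted --- one for each cyclic $p$-torsion summand of the integral Koszul homology $\homology_i(C_\bullet)_j$ --- which also makes the link to Remark~\ref{remark:depOnCharIffTorsion} (dependence on characteristic equals integral torsion) completely transparent; and you avoid both the localization at $\ints_{(p)}$ and the structure theorem for resolutions, at the cost of invoking the UCT. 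What the paper's approach buys is brevity and generality: it never unpacks where the cancellations come from, and it visibly works verbatim for any homogeneous $\mathfrak a$ with $A/\mathfrak a$ flat over $\ints$ without choosing a particular complex to compute Tor. One cosmetic slip at the end of your write-up: the identification $\homology_i(C_\bullet\otimes_\ints k)\cong\tor_i^{A\otimes_\ints k}\bigl(k,(A/\mathfrak a)\otimes_\ints k\bigr)$ needs only base change of the Koszul complex (it resolves $k$ over $k[x_1,\ldots,x_n]$ in any characteristic); the $\ints$-flatness, i.e.\ torsion-freeness, of $A/\mathfrak a$ is what you need earlier, to know the graded strands of $C_\bullet$ are free abelian groups so that the UCT applies --- which is exactly how you used it in the first paragraph.
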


\begin{proof}
Note that $A/\mathfrak a$ is a flat $\ints$-algebra.
Let $\mathbb F_\bullet$ be a minimal graded free $A \otimes_\ints
\ints_{(p)}$-resolution of $(A/\mathbf a) \otimes_\ints \ints_{(p)}$. Then
$\beta^{A/pA}\left(
(A/\mathfrak a) \otimes_\ints (\ints/p\ints) \right) = \beta(\mathbb
F_\bullet)$. Now, $\mathbb F_\bullet \otimes_{\ints_{(p)}} \rationals$ is a
graded free $(A\otimes_\ints \rationals)$-resolution of 
$(A/\mathfrak a) \otimes_\ints \rationals$.
Therefore we can write $\mathbb F_\bullet
\otimes_{\ints_{(p)}} \rationals = \mathbb G_\bullet \oplus \mathbb
G'_\bullet$ where $\mathbb G_\bullet$ is a minimal graded free
$(A\otimes_\ints \rationals)$-resolution of $(A/\mathfrak a) \otimes_\ints
\rationals$ and $\mathbb G'_\bullet$ is graded trivial complex of free
$(A\otimes_\ints \rationals)$-modules~\cite{eiscommalg}*{Theorem~20.2}.
Therefore $\beta (\mathbb G_\bullet)$ can be obtained from $\beta (\mathbb
F_\bullet)$ by a sequence of consecutive cancellations; now, note that
$\beta^{A\otimes_\ints \rationals}\left( (A/\mathfrak a) \otimes_\ints
\rationals \right) = \beta(\mathbb G_\bullet)$.
\end{proof}

The following is an elaboration of the `truncation principle' of
D.~Eisenbud, C.~Huneke and B.~Ulrich~\cite
{EHUregtor04}*{Proposition~1.6}.

\begin{lemma}
\label{lemma:trunc}
Let $t \in \naturals$. Then for all $i \geq 0$ and for all $j > i+t$,
$\beta_{i,j}(I \cap \mathfrak m^t) = \beta_{i,j}(I)$ 
\end{lemma}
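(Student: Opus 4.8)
The plan is to realize $I \cap \mathfrak{m}^t$ as the truncation of $I$ in degrees $\geq t$ and to compare it with $I$ through the resulting finite-length quotient. Since $(\mathfrak{m}^t)_d = R_d$ for $d \geq t$ and $(\mathfrak{m}^t)_d = 0$ for $d < t$, the graded pieces of $I \cap \mathfrak{m}^t$ are $I_d$ for $d \geq t$ and $0$ for $d < t$; hence the quotient $N \defeq I/(I \cap \mathfrak{m}^t)$ equals $\bigoplus_{d < t} I_d$, a module of finite length concentrated in degrees at most $t-1$. First I would write down the short exact sequence
\[
0 \longrightarrow I \cap \mathfrak{m}^t \longrightarrow I \longrightarrow N \longrightarrow 0
\]
and apply $\tor^R_\bullet(\Bbbk, -)$ to obtain, in each internal degree $j$, the exact strand
\[
\tor_{i+1}^R(\Bbbk, N)_j \to \tor_i^R(\Bbbk, I \cap \mathfrak{m}^t)_j \to \tor_i^R(\Bbbk, I)_j \to \tor_i^R(\Bbbk, N)_j.
\]
The goal then reduces to showing that the two outer terms vanish whenever $j > i+t$, so that the middle arrow is an isomorphism and $\beta_{i,j}(I \cap \mathfrak{m}^t) = \beta_{i,j}(I)$.

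The crux is the estimate $\reg N \leq t-1$. Since $N$ has finite length with top nonzero degree at most $t-1$, its only nonvanishing local cohomology is $H^0_{\mathfrak{m}}(N) = N$, supported in degrees $\leq t-1$, whence $\reg N \leq t-1$. (Equivalently, I could filter $N$ by its graded pieces $N_d \cong \Bbbk^{\dim_\Bbbk I_d}(-d)$, each of regularity $d \leq t-1$, and iterate the inequality $\reg B \leq \max\{\reg A, \reg C\}$ for a short exact sequence $0 \to A \to B \to C \to 0$.) In terms of Betti numbers this says $\beta_{a,j}(N) = \dim_\Bbbk \tor_a^R(\Bbbk, N)_j = 0$ whenever $j - a > t-1$, that is, whenever $j \geq a+t$.

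Feeding this back into the exact strand: for $j > i+t$ we have $j \geq i+t$, so $\tor_i^R(\Bbbk, N)_j = 0$, and also $j \geq (i+1)+t$, so $\tor_{i+1}^R(\Bbbk, N)_j = 0$; both outer terms vanish and the desired isomorphism $\tor_i^R(\Bbbk, I \cap \mathfrak{m}^t)_j \cong \tor_i^R(\Bbbk, I)_j$ follows. I expect the only genuine content to be the regularity bound $\reg N \leq t-1$; the remainder is a formal diagram chase. It is worth noting that the threshold $j > i+t$ is exactly what the method yields, the binding constraint being the vanishing of $\tor_{i+1}^R(\Bbbk, N)_j$, which requires $j \geq i+t+1$; and that the whole argument is insensitive to $\charact \Bbbk$, so the comparison holds uniformly over every field.
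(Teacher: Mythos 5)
Your proof is correct, and it is a genuinely different packaging of the same underlying vanishing than the paper uses. The paper argues by induction on minimal generators: if $I$ is minimally generated by $f_1,\ldots,f_r$ and $\tilde I = (f_2,\ldots,f_r)+f_1\mathfrak m$, then the short exact sequence $0 \to R/(\tilde I :_R f_1)(-\deg f_1) \to R/\tilde I \to R/I \to 0$, together with the computation $(\tilde I :_R f_1)=\mathfrak m$, shows $\beta_{i,j}(\tilde I)=\beta_{i,j}(I)$ for all $j > i+\deg f_1+1$; since every generator being replaced has degree at most $t-1$, iterating this finitely many times passes from $I$ to $I\cap\mathfrak m^t$ without disturbing any Betti number with $j>i+t$. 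You instead compare $I$ and $I\cap\mathfrak m^t$ in one step through the finite-length quotient $N = I/(I\cap\mathfrak m^t)$, concentrated in degrees at most $t-1$, and the bound $\reg N \le t-1$, which kills both outer terms of the Tor strand precisely when $j>i+t$. The two arguments rest on the same kernel computation: your parenthetical filtration of $N$ by its graded pieces, with quotients isomorphic to sums of $\Bbbk(-d)$, $d\le t-1$, is exactly the devissage that the paper's induction performs one generator at a time, its error term at each step being $\Bbbk(-\deg f_1)$. What your version buys is brevity and rigor in the bookkeeping: you never have to argue that the iterative replacement terminates and that its end result is exactly $I\cap\mathfrak m^t$, a point the paper leaves implicit in the phrase ``repeatedly applying.'' What the paper's version buys is elementarity: it needs only the Betti numbers of $\Bbbk(-\deg f_1)$, i.e.\ the Koszul resolution, and never invokes the regularity calculus for finite-length modules (local cohomology, or the inequality $\reg B \le \max\{\reg A,\reg C\}$ across short exact sequences) --- though, as you note, your filtration alternative reduces that machinery to the same Koszul input, so the dependence is inessential.
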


\begin{proof}
The lemma follows by repeatedly applying (finitely many times) the following.
\underline{Claim}: Suppose that $I$ is minimally generated by $f_1, \ldots,
f_r$. Write $\tilde I = (f_2, \ldots, f_r) + f_1\mathfrak m$. Then
$\beta_{i,j}(\tilde I) = \beta_{i,j}(I)$ for all $i \geq 0$ and for all $j
> i+ \deg f_1+1$. To prove the claim, consider the exact sequence
\[
\xymatrix{
0 \ar[r] & \frac R{(\tilde I :_R f_1)}(-\deg f_1) \ar[r] & R/\tilde I
\ar[r] & R/I \ar[r] & 0,
}
\]
and the associated exact sequence of $\tor$, 
\[
\xymatrix{
\ar[r] & 
\tor_i(\Bbbk, \Bbbk(-\deg f_1))_j \ar[r] & 
\tor_i(\Bbbk, R/\tilde I)_j \ar[r] & 
\tor_i(\Bbbk, R/I)_j \ar[r] & 
\tor_{i-1}(\Bbbk, \Bbbk(-\deg f_1))_j \ar[r] &. 
}
\]
(Here, we use the fact that $(\tilde I :_R f_1) = \mathfrak m$.) Now, 
$\beta_{i,j}(\Bbbk(-\deg f_1))  = 0 =\beta_{i,j}(\Bbbk(-\deg f_1))$
for all $i \geq 0$ and for all $j > i+ \deg f_1$, which proves the claim.
\end{proof}

For a homogeneous $R$-ideal $I$, set $d(I)$ to be the least degree of a
minimal generator of $I$, \textit{i.e.}, $d(I) = \min \{ j : \beta_{0,j}(I)
\neq 0\}$.

\begin{propn}
\label{thm:keyArgStrands}
Let $\mathcal C$ be a class of monomial $R$-ideals such that for all $I
\in \mathcal C$,
\begin{inparaenum}
\item \label{item:linStrIndep} $\beta_{i,i+d(I)}(I)$ is independent of
$\charact \Bbbk$, and 
\item \label{item:closedInters} $I \cap \mathfrak m^{d(I)+1} \in \mathcal C$.
\end{inparaenum}
Then for all $I \in \mathcal C$, $\beta(I)$ is independent of $\charact
\Bbbk$.
\end{propn}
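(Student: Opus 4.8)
The plan is to induct on the nonnegative integer $\reg(I) - d(I)$, peeling off one graded strand of the resolution at a time. Write $d = d(I)$ and set $J = I \cap \mathfrak m^{d+1}$, which lies in $\mathcal C$ by hypothesis~(\ref{item:closedInters}) and satisfies $d(J) = d+1$. First I would record the structure of the quotient $Q \defeq I/J$: since $d$ is the least degree of a generator, $I_{d-1} = 0$, so $\mathfrak m$ annihilates $Q$ and $Q \cong \Bbbk^a(-d)$ with $a = \beta_{0,d}(I) = \dim_\Bbbk I_d$; in particular $\tor_m(\Bbbk, Q)$ is concentrated in internal degree $m+d$ for every $m$, and a Koszul computation gives $\beta_{m,m+d}(Q) = a\binom{|V|}{m}$, which is independent of $\charact \Bbbk$. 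The base case $\reg(I) = d$ is a $d$-linear resolution, where the only nonzero Betti numbers lie in the strand $j = i+d$ and hypothesis~(\ref{item:linStrIndep}) finishes it. For the inductive step I would apply the standard regularity estimate to $0 \to J \to I \to Q \to 0$ to get $\reg(J) \le \max(\reg(I), \reg(Q)+1) = \reg(I)$ (using $\reg(I) \ge d+1$), while $d(J) = d+1$; hence $\reg(J) - d(J) < \reg(I) - d(I)$ and the induction hypothesis gives that $\beta(J)$ is independent of $\charact \Bbbk$.

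Next I would feed the short exact sequence $0 \to J \to I \to Q \to 0$ into the long exact sequence of $\tor_\bullet(\Bbbk, -)$ and analyze it one strand at a time, writing $\delta_m \colon \tor_m(\Bbbk, Q) \to \tor_{m-1}(\Bbbk, J)$ for the connecting maps. For the strands $j = i+s$ with $s \ge d+2$, Lemma~\ref{lemma:trunc} (with $t = d+1$) gives $\beta_{i,i+s}(I) = \beta_{i,i+s}(J)$, which is characteristic-independent by the induction hypothesis. The strand $j = i+d$ is handled directly by hypothesis~(\ref{item:linStrIndep}); moreover, since $\tor_m(\Bbbk,J)_{m+d} = 0$ and $\tor_{m-1}(\Bbbk,Q)_{m+d} = 0$, the long exact sequence collapses to $0 \to \tor_m(\Bbbk,I)_{m+d} \to \tor_m(\Bbbk,Q)_{m+d} \xrightarrow{\delta_m} \tor_{m-1}(\Bbbk,J)_{m+d} \to \tor_{m-1}(\Bbbk,I)_{m+d} \to 0$, from which I read off $\dim_\Bbbk \operatorname{im} \delta_m = \beta_{m,m+d}(Q) - \beta_{m,m+d}(I)$. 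Both terms on the right are independent of $\charact \Bbbk$, so $\dim_\Bbbk \operatorname{im} \delta_m$ is independent of $\charact \Bbbk$ for every $m$.

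The one remaining strand, $j = i+d+1$, is where the content sits, and I expect it to be the main obstacle. Here the long exact sequence gives $\beta_{i,i+d+1}(I) = \beta_{i,i+d+1}(J) - \dim_\Bbbk \operatorname{im} \delta_{i+1}$, and a priori the rank of $\delta_{i+1}$ could vary with the characteristic. The observation that resolves this is that $Q$ is concentrated in the single strand $j = i+d$, so $\tor_{i+1}(\Bbbk,Q)$ sits entirely in internal degree $(i+1)+d = i+d+1$; hence the connecting map appearing in the strand $j = i+d+1$ is the \emph{same} map $\delta_{i+1}$ whose image rank was already pinned down (at $m = i+1$) in the analysis of the strand $j = i+d$. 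Thus $\dim_\Bbbk \operatorname{im}\delta_{i+1}$ is independent of $\charact \Bbbk$, and since $\beta_{i,i+d+1}(J)$ is the linear strand of $J$ --- independent of $\charact \Bbbk$ by hypothesis~(\ref{item:linStrIndep}) applied to $J$, or by the induction hypothesis --- so is $\beta_{i,i+d+1}(I)$. Collecting the three ranges of strands completes the inductive step and hence the proof. The crux is exactly this bookkeeping: because $Q$ occupies a single strand, every connecting homomorphism lands inside the linear strands of $Q$ and $J$, so hypothesis~(\ref{item:linStrIndep}) alone controls their ranks.
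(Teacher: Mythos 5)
Your proof is correct, and its skeleton — induction on $\reg I - d(I)$, the base case handled by hypothesis~\eqref{item:linStrIndep}, and Lemma~\ref{lemma:trunc} pinning down the strands $j \geq i + d(I)+2$ via the induction hypothesis applied to $J = I \cap \mathfrak m^{d(I)+1}$ — is the same as the paper's; where you genuinely diverge is the one remaining strand $j = i + d(I)+1$. The paper disposes of it by invoking Proposition~\ref{thm:flatnessAndIndependence}: the Betti table over $\rationals$ is obtained from the table over any field by consecutive cancellations, and since a cancellation always pairs entries in two \emph{adjacent} strands, only decreases entries, and every strand other than $d(I)+1$ is already known to be characteristic-independent, no cancellation can occur at all, forcing the tables to coincide. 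You instead run the long exact sequence of $\tor(\Bbbk,-)$ on $0 \to J \to I \to Q \to 0$, exploiting that $Q = I/J \simeq \Bbbk^a(-d)$ with $a = \dim_\Bbbk I_d$ the number of degree-$d$ monomials in $I$ (so $\beta_{m,m+d}(Q) = a\binom{|V|}{m}$ is characteristic-free); the rank of each connecting map $\delta_m$ is then determined by the linear strands of $Q$ and $I$, and in turn determines the $(d+1)$-strand of $I$ from the linear strand of $J$. Both arguments are complete, and your bookkeeping checks out, including the point the paper leaves implicit — that $\reg J \leq \max(\reg I, \reg Q + 1) = \reg I$ and $d(J) = d(I)+1$, so the induction parameter actually drops. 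What the paper's route buys is brevity and reuse, since Proposition~\ref{thm:flatnessAndIndependence} is proved anyway and exploited elsewhere in the paper; what your route buys is self-containedness: it never leaves the given field, needs no lifting to $\ints$ nor flatness over $\ints$, and compares any two characteristics directly rather than through $\rationals$, at the cost of the explicit rank analysis of the connecting homomorphisms.
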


\begin{proof}
Let $I \in \mathcal C$.  We prove the theorem by induction on $\reg I -
d(I)$. If $\reg I = d(I)$, then the resolution of $I$ is $d(I)$-linear.
The only non-zero entries in $\beta(I)$ are $\beta_{i,i+d(I)}(I), i \geq
0$. Hence, by hypothesis~\eqref{item:linStrIndep}, $\beta(I)$ is
independent of $\charact \Bbbk$.

If $\reg I > d(I)$, then, by \eqref{item:closedInters} and 
the induction hypothesis, $\beta((I \cap
\mathfrak m^{d(I)+1})$ is independent of $\charact \Bbbk$. By
Lemma~\ref {lemma:trunc}, $\beta_{i,j}(I)$ is independent of $\charact
\Bbbk$ for all $i \geq0$ and for all $j \geq i+d(I)+2$.
Proposition~\ref{thm:flatnessAndIndependence}, along
with~\eqref{item:linStrIndep}, now finishes the proof.
\end{proof}

\begin{lemma}
\label{lemma:tLinearStr}
For all $i \geq 0$, $\beta_{i,i+d(I)}((I_{d(I)})R) =
\beta_{i,i+d(I)}(I)$.
\end{lemma}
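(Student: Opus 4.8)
The plan is to realize both bottom linear strands through the short exact sequence that $(I_{d(I)})R$ and $I$ fit into, and to read the strand off the long exact $\tor$ sequence. Write $d = d(I)$. Since $d$ is the least degree of a nonzero element of $I$, we have $I_j = 0$ for $j < d$, and $\left((I_d)R\right)_d = I_d$ is the entire degree-$d$ component of $I$. Hence the graded quotient $N \defeq I/(I_d)R$ satisfies $N_j = 0$ for all $j \leq d$, so $N$ is generated in degrees $\geq d+1$.

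First I would write the short exact sequence of graded $R$-modules
\[
0 \longrightarrow (I_d)R \longrightarrow I \longrightarrow N \longrightarrow 0
\]
and pass to its long exact sequence in $\tor_*(\Bbbk, -)$. In internal degree $j = i+d$ the relevant portion is
\[
\tor_{i+1}(\Bbbk, N)_{i+d} \longrightarrow \tor_i(\Bbbk, (I_d)R)_{i+d}
\longrightarrow \tor_i(\Bbbk, I)_{i+d} \longrightarrow \tor_i(\Bbbk, N)_{i+d}.
\]

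Next I would annihilate the two flanking terms involving $N$. The input is the standard fact that a finitely generated graded module generated in degrees $\geq a$ has $\beta_{k,j} = 0$ whenever $j < a + k$: each step of a minimal free resolution raises the least generating degree by at least one, since $\phi_k(F_k) \subseteq \mathfrak m F_{k-1}$. Applying this to $N$ with $a = d+1$ gives $\beta_{k,j}(N) = 0$ for all $j \leq k + d$. In particular $\beta_{i,i+d}(N) = 0$, and also $\beta_{i+1,i+d}(N) = 0$ because $i+d \leq (i+1)+d$; that is, both $\tor_i(\Bbbk, N)_{i+d}$ and $\tor_{i+1}(\Bbbk, N)_{i+d}$ vanish. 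The displayed four-term sequence therefore collapses to an isomorphism $\tor_i(\Bbbk, (I_d)R)_{i+d} \simeq \tor_i(\Bbbk, I)_{i+d}$, which is exactly $\beta_{i,i+d}((I_d)R) = \beta_{i,i+d}(I)$.

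There is no deep obstacle here; the only point requiring care is the degree bookkeeping, specifically that the term one homological step up, $\tor_{i+1}(\Bbbk, N)_{i+d}$, is still killed at internal degree $i+d$. This holds, but only barely, so I would apply the generation-degree bound to $N$ precisely and double-check the inequality $j \leq k+d$ at the value $k = i+1$.
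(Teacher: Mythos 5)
Your proof is correct, but it takes a genuinely different route from the paper's. The paper splits $I$ as a sum of two ideals: with $J$ the subideal generated by the minimal generators of $I$ of degree at least $d(I)+1$, it uses the sequence $0 \to R/((I_{d(I)})R \cap J) \to R/(I_{d(I)})R \oplus R/J \to R/I \to 0$ and kills the relevant $\tor$ groups of the two auxiliary modules $R/J$ and $R/((I_{d(I)})R \cap J)$ along the bottom strand. You instead use the quotient sequence $0 \to (I_{d(I)})R \to I \to N \to 0$, which requires controlling only one auxiliary module, and the needed input --- that $N$ vanishes in degrees $\leq d(I)$, hence is generated in degrees $\geq d(I)+1$ --- is immediate from $((I_{d(I)})R)_{d(I)} = I_{d(I)}$. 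This is a real economy: in the paper's decomposition, bounding the generation degree of the intersection $(I_{d(I)})R \cap J$ takes a separate small argument (its degree-$(d(I)+1)$ component vanishes because minimal generators of $I$ of degree $d(I)+1$ cannot lie in $\mathfrak m I \supseteq R_1 I_{d(I)}$), and the degree bookkeeping there is delicate enough that the vanishing claims as printed in the paper are off by one in the internal degree (for instance $\beta_{i,i+d(I)}(R/J)$ need not vanish when $J$ has generators in degree $d(I)+1$), whereas your two vanishing statements are exactly what the long exact sequence needs. One tiny slip in your closing remark: the term that is killed ``only barely'' is $\tor_i(\Bbbk, N)_{i+d}$, where the bound $j \leq k + d(I)$ is attained with equality, not $\tor_{i+1}(\Bbbk, N)_{i+d}$, which has a degree to spare; both vanish, so nothing in the argument is affected.
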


\begin{proof}
Let $J \subseteq I$ be the subideal generated by the minimal generators of
$I$ of degree $d(I)+1$ or greater. Then $I = I_{d(I)} + J$.
Consider the exact sequence
\[
0 \rightarrow R/(I_{d(I)} \cap J) \rightarrow R/I_{d(I)} \oplus R/J
\rightarrow R/I \rightarrow 0
\]
and the associated exact sequence of $\tor$, 
\[
\xymatrix{
\ar[r] & 
\tor_i(\Bbbk, \frac R{I_{d(I)} \cap J})_j \ar[r] &
{\begin{matrix}
\tor_i(\Bbbk, R/I_{d(I)})_j \\ \oplus \\ \tor_i(\Bbbk, R/J)_j
\end{matrix}}
\ar[r] & 
\tor_i(\Bbbk, R/I)_j \ar[r] & 
\tor_{i-1}(\Bbbk, \frac R{I_{d(I)} \cap J})_j \ar[r] &.
}
\]
Now, for all $i \geq 1$,
$\beta_{i, i+d(I)}(\frac R{I_{d(I)} \cap J}) = 
\beta_{i-1, i+d(I)}(\frac R{I_{d(I)} \cap J}) = 
\beta_{i,i+d(I)}(R/J) = \beta_{i-1,i+d(I)}(R/J) = 0$. This proves the
lemma.
\end{proof}

\begin{proof}[Proof of Theorem~\ref{thm:cpntLinear}]
We will verify that ideals with componentwise linear resolution satisfy the
hypotheses of Proposition~\ref {thm:keyArgStrands}. By definition,
$(I_{d(I)})R$ has a $d(I)$-linear resolution in all characteristics. By
Proposition~\ref{thm:flatnessAndIndependence}, $\beta((I_{d(I)})R)$ does
not depend on $\charact \Bbbk$, so, by Lemma~\ref{lemma:tLinearStr}, we see
that hypothesis~\eqref{item:linStrIndep} is satisfied.
Hypothesis~\eqref{item:closedInters} is obtained from noting that
for all $t \geq d(I)+1$, $I_t = (I \cap \mathfrak m^{d(I)+1})_t$.
\end{proof}

\begin{remark}
We note that the proofs of Proposition~\ref{thm:keyArgStrands} and
Theorem~\ref{thm:cpntLinear} will hold, \textit{mutatis mutandis}, if we
replace the phrase ``$I$ is a monomial $R$-ideal'' with the phrase ``$I$ is
the image in $R$ of a $\ints[x_1, \ldots x_n]$-ideal $\mathfrak a$ such
that $\ints[x_1, \ldots x_n]/\mathfrak a$ is a flat $\ints$-algebra''.
\end{remark}

\subsection*{Examples}

Theorem~\ref{thm:cpntLinear} shows that we cannot detect dependence on the
characteristic using Alexander duality. For, let $I$ be an ideal (such as
the one in Remark~\ref{remark:vertDecExample}) such that
$R/I$ is Cohen--Macaulay in all characteristics, but $\beta(I)$ depends on
the characteristic. By a result of J.~Eagon and V.~Reiner~\cite
{MiStCCA05}*{Theorem~5.56}, its Alexander dual $I^\vee$ has a linear
resolution in all characteristics. Hence $\beta(I^\vee)$ is independent
of $\charact \Bbbk$.

On the other hand, stable ideals have componentwise
linear resolutions, given by S.~Eliahou and M.~Kervaire; see~\cite
{MiStCCA05}*{Section~2.3} and~\cite {HeHiCptLin99}*{Example~1.1}. Therefore
for any stable ideal $I$,
$\beta(I)$ is independent of $\charact \Bbbk$. 

Now, as an application of Proposition~\ref{thm:flatnessAndIndependence}, we
obtain that if $I$ is the edge ideal of a chordal graph $G$, then
$\beta(I)$ does not depend on characteristic. T.~Hibi, K.~Kimura and
S.~Murai~\cite{HiKiMuChordalBetti09}*{Theorem~2.1} show that the sequence
$(\beta_i(R/I))$ of total Betti numbers depend only on $I$. By
Proposition~\ref{thm:flatnessAndIndependence}, $\beta(I)$ is independent of
$\charact \Bbbk$. As another corollary, we see that if $R/I$ has a pure
resolution in all characteristics, then $\beta(I)$ does not depend on the
characteristic.

\section*{Acknowledgements}
We thank J.~Herzog for helpful comments. Parts of this work were
completed at the Pan American Scientific Institute Summer School on
``Commutative Algebra and its Connections to Geometry'' in Olinda, Brazil,
and when the second author visited the University of Missouri; we thank
both institutions for their hospitality. The computer algebra system
\texttt{Macaulay2} provided valuable assistance in studying examples.


\begin{bibdiv}
\begin{biblist}

\bib{BrHe:CM}{book}{
      author={Bruns, Winfried},
      author={Herzog, J{\"u}rgen},
       title={Cohen-{M}acaulay rings},
      series={Cambridge Studies in Advanced Mathematics},
   publisher={Cambridge University Press},
     address={Cambridge},
        date={1993},
      volume={39},
        ISBN={0-521-41068-1},
      review={\MR{1251956}},
}

\bib{ChariDMT00}{article}{
      author={Chari, Manoj~K.},
       title={On discrete {M}orse functions and combinatorial decompositions},
        date={2000},
        ISSN={0012-365X},
     journal={Discrete Math.},
      volume={217},
      number={1-3},
       pages={101\ndash 113},
         url={http://dx.doi.org/10.1016/S0012-365X(99)00258-7},
        note={Formal power series and algebraic combinatorics (Vienna, 1997)},
      review={\MR{1766262}},
}

\bib{EHUregtor04}{article}{
      author={Eisenbud, David},
      author={Huneke, Craig},
      author={Ulrich, Bernd},
       title={The regularity of {T}or and graded {B}etti numbers},
        date={2006},
        ISSN={0002-9327},
     journal={Amer. J. Math.},
      volume={128},
      number={3},
       pages={573\ndash 605},
      review={\MR{MR2230917}},
}

\bib{eiscommalg}{book}{
      author={Eisenbud, David},
       title={Commutative algebra},
      series={Graduate Texts in Mathematics},
   publisher={Springer-Verlag},
     address={New York},
        date={1995},
      volume={150},
        ISBN={0-387-94268-8; 0-387-94269-6},
        note={With a view toward algebraic geometry},
      review={\MR{97a:13001}},
}

\bib{FariFacetIdeal02}{article}{
      author={Faridi, Sara},
       title={The facet ideal of a simplicial complex},
        date={2002},
        ISSN={0025-2611},
     journal={Manuscripta Math.},
      volume={109},
      number={2},
       pages={159\ndash 174},
         url={http://dx.doi.org/10.1007/s00229-002-0293-9},
      review={\MR{1935027}},
}

\bib{FormanUsersGuideDMT02}{article}{
      author={Forman, Robin},
       title={A user's guide to discrete {M}orse theory},
        date={2002},
        ISSN={1286-4889},
     journal={S\'em. Lothar. Combin.},
      volume={48},
       pages={Art.\ B48c, 35},
      review={\MR{1939695}},
}

\bib{FormanDMT98}{article}{
      author={Forman, Robin},
       title={Morse theory for cell complexes},
        date={1998},
        ISSN={0001-8708},
     journal={Adv. Math.},
      volume={134},
      number={1},
       pages={90\ndash 145},
         url={http://dx.doi.org/10.1006/aima.1997.1650},
      review={\MR{1612391}},
}

\bib{HatcAlgTop02}{book}{
      author={Hatcher, Allen},
       title={Algebraic topology},
   publisher={Cambridge University Press},
     address={Cambridge},
        date={2002},
        ISBN={0-521-79160-X; 0-521-79540-0},
      review={\MR{1867354}},
}

\bib{HeHiCptLin99}{article}{
      author={Herzog, J{\"u}rgen},
      author={Hibi, Takayuki},
       title={Componentwise linear ideals},
        date={1999},
        ISSN={0027-7630},
     journal={Nagoya Math. J.},
      volume={153},
       pages={141\ndash 153},
      review={\MR{MR1684555}},
}

\bib{HiKiMuChordalBetti09}{article}{
      author={Hibi, Takayuki},
      author={Kimura, Kyouko},
      author={Murai, Satoshi},
       title={Betti numbers of chordal graphs and {$f$}-vectors of simplicial
  complexes},
        date={2010},
        ISSN={0021-8693},
     journal={J. Algebra},
      volume={323},
      number={6},
       pages={1678\ndash 1689},
         url={http://dx.doi.org/10.1016/j.jalgebra.2009.12.029},
      review={\MR{2588131}},
}

\bib{KatzmanCharIndep06}{article}{
      author={Katzman, Mordechai},
       title={Characteristic-independence of {B}etti numbers of graph ideals},
        date={2006},
        ISSN={0097-3165},
     journal={J. Combin. Theory Ser. A},
      volume={113},
      number={3},
       pages={435\ndash 454},
      review={\MR{MR2209703}},
}

\bib{MiStCCA05}{book}{
      author={Miller, Ezra},
      author={Sturmfels, Bernd},
       title={Combinatorial commutative algebra},
      series={Graduate Texts in Mathematics},
   publisher={Springer-Verlag},
     address={New York},
        date={2005},
      volume={227},
        ISBN={0-387-22356-8},
      review={\MR{MR2110098}},
}

\bib{PrBiDecSimpCx80}{article}{
      author={Provan, J.~Scott},
      author={Billera, Louis~J.},
       title={Decompositions of simplicial complexes related to diameters of
  convex polyhedra},
        date={1980},
        ISSN={0364-765X},
     journal={Math. Oper. Res.},
      volume={5},
      number={4},
       pages={576\ndash 594},
         url={http://dx.doi.org/10.1287/moor.5.4.576},
      review={\MR{593648}},
}

\bib{PeevConsecCanc04}{article}{
      author={Peeva, Irena},
       title={Consecutive cancellations in {B}etti numbers},
        date={2004},
        ISSN={0002-9939},
     journal={Proc. Amer. Math. Soc.},
      volume={132},
      number={12},
       pages={3503\ndash 3507},
         url={http://dx.doi.org/10.1090/S0002-9939-04-07517-3},
      review={\MR{2084070}},
}

\bib{TeraiHibiBettiNos96}{inproceedings}{
      author={Terai, Naoki},
      author={Hibi, Takayuki},
       title={Some results on {B}etti numbers of {S}tanley-{R}eisner rings},
        date={1996},
   booktitle={Proceedings of the 6th {C}onference on {F}ormal {P}ower {S}eries
  and {A}lgebraic {C}ombinatorics ({N}ew {B}runswick, {NJ}, 1994)},
      volume={157},
       pages={311\ndash 320},
      review={\MR{MR1417301}},
}

\bib{XuPlanarCliques01}{incollection}{
      author={Xu, Baogang},
       title={Plane graphs with acyclic complex},
        date={2001},
   booktitle={Computing and combinatorics ({G}uilin, 2001)},
      series={Lecture Notes in Comput. Sci.},
      volume={2108},
   publisher={Springer},
     address={Berlin},
       pages={395\ndash 399},
      review={\MR{1935397}},
}

\end{biblist}
\end{bibdiv}

\end{document}